\newcommand{\Pb}{{\mathbb P}}
\newcommand{\R}{{\mathbb R}}
\newcommand{\E}{{\mathbb E}}
\newcommand{\var}{{\mathbb{V}\mathrm{ar}}}
\newcommand{\cov}{{\mathbb{C}\mathrm{ov}}}
\newcommand{\tZ}{{\widetilde Z}}
\newtheorem{lemma}{Lemma}
\newtheorem{theorem}{Theorem}
\newtheorem{proposition}{Proposition}
\newtheorem{problem}{Problem}
\theoremstyle{definition}
\newtheorem{definition}{Definition}
\newtheorem{remark}{Remark}
\newtheorem{corollary}{Corollary}
\DeclareMathOperator{\ind}{1{\hskip -2.5 pt}\mathrm{I}}
\newcommand{\beq}{\begin{equation}}
\newcommand{\eeq}{\end{equation}}
\newcommand{\beqn}{\begin{eqnarray}}
\newcommand{\eeqn}{\end{eqnarray}}
\newcommand{\rosso}[1]{{\color{black}{ #1}}}
\def\mb{\textcolor{black} }
\begin{document}

\title{{\bf \Large SDEs with uniform distributions: peacocks, \\ conic martingales and ergodic uniform diffusions}}
\author{ Damiano Brigo\thanks{Department of Mathematics, Mathematical Finance and Stochastic Analysis groups, Imperial College London, {\small \tt{damiano.brigo@imperial.ac.uk}}
}
\and Monique Jeanblanc\thanks{Laboratoire de Math\'ematiques et Mod\'elisation d'\'Evry (LaMME),   Universit\'e d'\'Evry-Val-d'Essonne, UMR CNRS 8071
{\small \tt{monique.jeanblanc@univ-evry.fr}}}
\and  Fr\'ed\'eric Vrins\thanks{
 Louvain Finance Center (LFIN) and Center for Operational Research and Econometrics (CORE),  UCLouvain, {\small \tt{frederic.vrins@uclouvain.be}}}
}

\date{First version: April 4, 2016. This version: \today}

\maketitle

\begin{abstract}
It is known since Kellerer (1972) that for any process that is increasing for the convex order, or ``peacock" as in Hirsch et al. 2011 \cite{peacock}, there exist martingales with the same \mb{marginals laws}. Nevertheless, there is no general constructive method for finding such martingales that yields diffusions. We consider the \textit{uniform peacock}, namely the peacock with uniform law at all times on a generic \textit{time-varying support} $[a(t),b(t)]$. We derive explicitly the corresponding SDEs and prove that, under certain \rosso{``conic''} conditions on $a(t)$ and  $b(t)$, they admit a unique strong diffusive solution. \rosso{To guess the candidate SDE we resort to the approach of inverting the Fokker Planck equation. Dupire (1994) \cite{dupire} did this for volatility modeling. Here we tackle the inversion with the caveats needed when dealing with uniform margins with conic boundaries. This was done originally in the unpublished preprint by Brigo (1999) \cite{BrigoIMI}.} Independently,  Madan and Yor (2002) \cite{Mada02}  obtained the result as a simple application of Dupire. Once the SDE is guessed, we analyze it rigorously, discussing cases \mb{ where our approach adds strong uniqueness  of the solution of the SDE and cases where only a weak solution is obtained.} We further study the local time and activity of the solution.
We then study the peacock with uniform law at all times on a \textit{constant support} $[-1,1]$ and derive the SDE of an associated mean-reverting diffusion process with uniform margins that is not a martingale. For the related SDE we prove existence of a solution. We derive the exact transition densities \rosso{for both the mean reverting and the original conic martingale cases}. We prove limit-laws and ergodic results:  the SDE solution transition law tends to be uniform after a long time. Finally, we provide a numerical study confirming the desired uniform behaviour. These results may be used to model random probabilities, recovery rates or correlations.
\end{abstract}

\medskip

{\bf Keywords:} Uniformly distributed Stochastic Differential Equation, Conic Martingales, Peacock Process, Uniformly distributed Diffusion, Mean Reverting Uniform SDE.

\medskip

AMS classification codes: 60H10,  60J60

\pagestyle{myheadings}
\markboth{}{D. Brigo, M. Jeanblanc \& F. Vrins. SDEs with uniformly--distributed solutions.}		


\section{Introduction}
A \textit{peacock} is an integrable process that is increasing in the convex order. Equivalently, a peacock is a process with  (i) constant expected value and (ii) whose transform via any positive and convex function $\Psi$ has an increasing expectation (see Definition 1.3 in \cite{peacock}). More precisely, \mb{a peacock is a process $X$ with constant expected value such that $t \mapsto \E(\Psi (X_t))$  is } increasing for any  convex function $\Psi$ such that $\E(\vert \Psi (X_t)\vert)<\infty$ for all $t$.
 From this equivalent representation, it is trivial to show via the law of iterated expectations and Jensen's inequality that any martingale is a peacock.
%
Reciprocally, it is known from Kellerer~\cite{Kell72} that for any peacock there exist martingales (called \textit{associated martingales}) with the same marginal laws. Nevertheless, there is no guarantee that these associated martingales are diffusions. Moreover, specifying explicitly \mb{a class of   martingales} associated to a given peacock is not trivial. \medskip

In this paper, we provide the explicit dynamics of diffusion processes associated to the \textit{uniform peacocks} that is, the peacocks whose marginals have a uniform distribution on a time-varying support imposing, without loss of generality, $X_0=0$. \rosso{To that end, we study a family of regular diffusion martingales obtained from \cite{BrigoIMI} or \cite{Mada02}, with an approach reminiscent of \cite{dupire}. These martingales evolve on the expanding (``conic'') support $t \mapsto [-b(t),b(t)]$, $b(0)=0$. These diffusion martingales will be obtained via the Stochastic Differential Equation (SDE)}
\begin{equation*}
d X_t = \left(\ind_{\{X_t \in [-b(t),b(t)]\}} \frac{\dot{b}(t)}{b(t)} (b(t)^2 - X_t^2)\right)^{1/2} dW_t, \ \ X_0=0,
\end{equation*}
We show that, under adequate conditions on the boundary, this SDE admits a unique strong solution which is associated to the uniform peacock. This extends previously known results where $b(t)$ is for example equal to $t$ (p. 252 in~\cite{peacock}), adding strong uniqueness. Our result allows one to  show strong existence and uniqueness for the case $b(t) = t^\alpha$, $\alpha \ge 1$. The case  $b(t)=\sqrt{t}$ has to be dealt with using different techniques. For cases like $b(t)=\sqrt{t}$ we use the approach in p. 253--260 of~\cite{peacock}, and in that case we can only obtain uniqueness in law. We further show that the solution processes spend zero time at the boundaries.

\medskip

The above diffusion coefficient was initially guessed by informally inverting the forward Kolmogorov (also known as Fokker-Planck) equation, when forcing the marginal density of the solution $X$ to be uniform at all times with support $[-b,b]$ as initially sketched in the preprint \rosso{by Brigo (1999) \cite{BrigoIMI}. This inversion is presented in Section \ref{sec:informal}. }
%
%
\rosso{This inversion technique was used in the past by the first named author to construct diffusion processes with densities in exponential families \cite{brigogyor,brigo00} and has been used more generally in a variety of contexts in mathematical finance, especially in volatility modelling. For example, the earlier Dupire (1994) \cite{dupire} finds the diffusion coefficient (``local volatility") that is consistent with a probability law extrapolated from a surface of option prices. The paper \cite{Brigobachelier} deals with designing a diffusion process consistent with a mixture of distributions for volatility smile modeling, whereas \cite{brigomercurio} inverts the Kolmogorov equation to show how two stochastic processes with indistinguishable laws in a time grid under the historical measure can lead to arbitrarily different option prices, possibly explaining the differences between historical and implied volatility.}

\rosso{However, in the context of peacocks and uniforms, the first published reference proposing the inversion of the Fokker Planck equation to obtain the uniform peacock is Madan and Yor (2002) \cite{Mada02} where the authors, partly building on Dupire (1994) \cite{dupire}, present three different construction schemes to find martingales associated to a given peacock, with full proofs. These three methods are: \textit{Skorohod embedding}, \textit{inhomogeneous independent increments} and \textit{continuous martingales}. The first two methods provide martingales taking the form of time-changed Brownian motions. The last method consists of inverting the Fokker-Plank equation (also known as forward Kolmogorov equation) and leads to the same solution as in \cite{BrigoIMI}, as it can be used to derive immediately the result in Section \ref{sec:informal}. The methods in \cite{Mada02}}
 \mb{ give the form of the volatility coefficient, and the authors point out that the diffusion exists in the standard case where  the diffusion coefficient is Lipschitz. Most }\rosso{of the cases we analyze will not satisfy this assumption}.


The paper is structured as follows. In Section \ref{sec:conicmarting} we formulate the problem. A solution is attempted in Section \ref{sec:informal} along the lines of the above mentioned inversion. We then study the solution rigorously in Section \ref{sec:rigorous} and prove that the related SDE admits a unique strong solution. We further prove that the solution has indeed a uniform distribution with the desired conic boundary. Being bounded on a finite horizon, the solution is thus a genuine martingale associated to the uniform peacock. 
In Section \ref{sec:meanrev} we re-scale the conic diffusion martingale and study the related mean-reverting uniform diffusions, where now the uniform law is not conic but constant. Two special cases of interest are standard uniforms and uniforms in $[-1,1]$, which can be used to model for example maximum--entropy recovery rates or random probabilities and random correlations, respectively.

In Section \ref{sec:ergodicth} we prove limit-law results for the mean-reverting uniform $[-1,1]$ re-scaled process. In doing so, we derive the exact transition density of the SDE solution. While we know that the solution margins are uniform by constructions, this will not hold for the transition densities in general and we characterize them via their moments. We also prove a limit-law result showing that after a long enough time any initial condition at a given time in the transition density is forgotten and the limit tends again to a uniform. We show that a particular case of the boundary $b(t)$ leads to an ergodic diffusion process, and that under reasonable regularity all other cases are deterministic time changes of this ergodic diffusion.

In Section \ref{sec:localtime} we further show that the rescaled processes have zero local time at the boundaries $-1$ and $1$.
In Section \ref{sec:specific} we revisit the two previously known cases and hint at new choices for the boundaries.
In the linear case we study the process pathwise activity, finding that the pathwise activity of the mean reverting diffusion vanishes asymptotically. The behavior of the process is illustrated based on numerical simulations that confirm our earlier characterization of the SDE having the desired marginal distribution and our limit-law type results.



\section{Conic diffusion martingales with uniform distribution}\label{sec:conicmarting}
We set out to construct a martingale diffusion process $X $ (zero drift), i.e. a diffusion process driven by a Brownian motion that is a martingale, with marginal at time \mb{$t>0$} having a uniform distribution in an interval $[a(t),b(t)]$. The martingale condition implies that $\E[X_t]  = \E[X_0]$ for all $t \ge 0$, whereas the uniform distribution requirement implies that $\E[X_t] = [a(t)+b(t)]/2$ for all $t \ge 0$. Thus we have
$a(t) + b(t) = a(0) + b(0)$ for all $t \geq 0$. We will assume $a(0)=b(0)=0$, taking the initial condition $X_0$ to be deterministic and with value zero (Dirac delta law in $0$).  Hence $b(t) = -a(t)$ for all $t\ge 0$.

With such preliminaries in mind, we state the following

\begin{problem}[Designing conic martingale diffusions with given uniform law]\label{pr:conic}
Consider the diffusion process
\begin{equation}\label{eq:martinX} d X_t = \sigma(X_t,t) dW_t, \ \ X_0 =0.
\end{equation}
Find a diffusion coefficient $\sigma(x,t)$ such that
\begin{enumerate}
\item The SDE \eqref{eq:martinX} has a unique strong solution;
\item The solution of \eqref{eq:martinX} at time $t>0$ is uniformly distributed in $[-b(t),b(t)]$ for a non-negative strictly increasing continuous function $t \mapsto b(t)$ with $b(0)=0$.
\end{enumerate}
\end{problem}
In other terms, our aim is to build a diffusion martingale $X $ as in \eqref{eq:martinX}
such that the process $X $ has a density $p(x,t)$ at time $t>0$ at the point $x$ given by the uniform density
\begin{equation}\label{eq:uniformp}
\rho(x,t) := \ind_{\{x \in [-b(t),b(t)]\}} / (2 \ b(t)) .
\end{equation}
We call such martingales ``conic" because their support opens up in time.

In Problem \ref{pr:conic}, $b$ is restricted to be strictly increasing in time. The reason is that the tight upper (resp. lower) bound of any bounded martingale must be a non-decreasing (resp. non-increasing) function (\cite{vrins}). Hence, $X $ is a \textit{conic martingale}; it is a martingale that exhibits a conic behavior. We will need strict monotonicity in the following derivation, so we assumed $b$ to be strictly increasing in Problem \ref{pr:conic}.

\section{Deriving the candidate SDE for a uniformly distributed martingale}\label{sec:informal}

\rosso{We present the approach in the preprint \cite{BrigoIMI}, although the same guess could be derived by applying results in the published paper \cite{Mada02}}. Let us now guess a candidate solution $\sigma$ for Problem~\ref{pr:conic}. To do this, we write the forward Kolmogorov (or Fokker Planck) equation for the density $p$ of \eqref{eq:martinX}, impose $\rho$ to be a solution and derive the resulting $\sigma$. The derivation is informal but it is given full mathematical rigor by showing later that the resulting SDE \eqref{eq:martinX} has a unique strong solution and confirming further, via moments analysis, that the density is indeed uniform.

The forward Kolmogorov eq. for \eqref{eq:martinX} with $\rho$ plugged in as a solution reads
\begin{equation}\label{eq:fokkerrho1}
\frac{\partial \rho(x,t)}{\partial t}= \frac{1}{2} \frac{\partial^2}{\partial x^2} (\sigma(x,t)^2 \rho(x,t)), \ \ \rho(x,0) = \delta_{0}(x).
\end{equation}
Now we integrate twice both sides of \eqref{eq:fokkerrho1} with respect to $x$ and assume we can switch integration with respect to  $x$ and differentiation with respect to  $t$ (one can check \textit{a posteriori} that the solution we find has a continuous partial derivative with respect to $t$ so that Leibniz's rule can be used). We obtain
\begin{equation}\label{eq:fokkerrho2}
\frac{\partial}{\partial t}\left( \int_{-\infty}^x \left(\int_{- \infty}^y \rho(z,t) dz\right)dy\right)= \frac{1}{2}  \sigma(x,t)^2 \rho(x,t),
\end{equation}
assuming the relevant first and second derivatives with respect to  $x$ on the right hand side vanish fast enough at minus infinity. Compute for $t> 0$, substituting from \eqref{eq:uniformp},
\[\varphi(x,t):= \int_{-\infty}^x \left(\int_{- \infty}^y \rho(z,t) dz\right)dy
=
\begin{cases}
    0,& \text{if } x < -b(t)\\
    \frac{(x+b(t))^2}{4 b(t )}, &  \text{if } x \in [ -b(t), b(t)]\\
    x  ,        & \text{if } x  > b(t)
\end{cases}
\]
and note that $\varphi$ is continuous in $x$. Equivalently,
\begin{equation}\label{eq:doubleprimitive} \varphi(x,t) = \frac{(x+b(t))^2}{4 b(t)} \ind_{\{x \in [-b(t),b(t)]\}} +
x \ind_{\{ x>b(t)\}}.
\end{equation}
Thus, rewriting \eqref{eq:fokkerrho2} as
\begin{equation}\label{eq:fokkerrho3}
\frac{\partial \varphi(x,t)}{\partial t}= \frac{1}{2}  \sigma(x,t)^2 \rho(x,t),
\end{equation}
and substituting \eqref{eq:doubleprimitive} we are done.
To do this, we need to differentiate $\varphi$ with respect to time.
The calculations are all standard but one has to pay attention when differentiating terms in \eqref{eq:doubleprimitive} such as
\[ \ind_{\{x \in [-b(t),b(t)]\}} = \ind_{\{x\ge-b(t)\}} - \ind_{\{x > b(t)\}} \]
which can be differentiated in the sense of distributions, 
\[ \frac{d}{dt} \ind_{\{x>b(t)\}} = \frac{d}{dt} \ind_{\{t < b^{-1}(x)\}} = - \delta_{b^{-1}(x)}(t) \]
where the index in $\delta$ denotes the point where the Dirac delta distribution is centered.
One can check that all terms involving $\delta$'s either offset each other or are multiplied by a function that vanishes at the point of evaluation.

Assuming $b$ is differentiable, omitting time arguments and denoting differentiation with respect to  time with a dot one gets:
\[ \frac{\partial \varphi(x,t)}{\partial t} = - \frac{-\dot{b} (2 b) (x+b) + (2 \dot{b}) (x+b)^2}{2(2 b)^2} \ind_{\{x \in [-b,b]\}} . \]
We notice that $\dot{b}$ appears only in ratios $\dot{b}/b$, so that this quantity may be extended to time $t=0$ by continuity if needed provided that the limit exists.

The above quantity is the left hand side of \eqref{eq:fokkerrho3}. We can substitute $\rho$ on the right hand side and we have that
\[- \frac{-\dot{b} (2b) (x+b) + (2\dot{b}) (x+b)^2}{2(2b)^2} \ind_{\{x \in [-b,b]\}}  = \frac{1}{2} \frac{\sigma(x,t)^2}{2b} \ind_{\{x \in [-b,b]\}}.\]
After some algebra, one obtains
\[ \sigma^2(x,t) = \ind_{\{x \in [-b(t),b(t)]\}} \frac{\dot{b}(t)}{b(t)} (b(t)^2 - x^2).\]

From the above development, we expect the diffusion coefficient $\sigma(x,t)$ defined as
\begin{equation}
\sigma(x,t) := \ind_{\{x \in [-b(t),b(t)]\}} \sqrt{\frac{\dot{b}(t)}{b(t)} (b(t)^2 - x^2)} \label{eq:dc}
\end{equation}
to be a valid candidate for the solution $X $ of \eqref{eq:martinX} to be a martingale with marginals having a uniform law in $[-b,b]$. In order to rigorously show that, we prove in the  next section that, under suitable regularity condition on the boundaries $t \mapsto b(t)$, the SDE \eqref{eq:martinX} with diffusion coefficient (\ref{eq:dc}) admits a unique strong solution and that this solution has indeed a uniform law at all times. In the more  general case where regularity of the boundary is relaxed we prove that the solution is unique in law.


\section{Analysis of the SDE: solutions and distributions}\label{sec:rigorous}

%
\begin{theorem}[Existence and Uniqueness of Solution for candidate SDE solving Problem \ref{pr:conic}]\label{th:exist}
Let $T>0$ and $b$ be a strictly increasing function defined on $[0,T]$, continuous in $[0,T]$ and continuously differentiable in $(0,T]$ and satisfying $b(0)=0$. Assume  $\dot{b}$ to be bounded in $(0,T]$.  The stochastic differential equation
\begin{equation}
\label{eq:martinX2} d X_t = \ind_{\{X_t \in [-b(t),b(t)]\}} \left( \frac{\dot{b}(t)}{b(t)} (b(t)^2 - X_t^2)\right)^{1/2} dW_t, \ \ X_0=0,
\end{equation}
whose diffusion coefficient is extended to $t=0$ by continuity via
\[ \sigma(x,0):=0 \ \ \mbox{for all} \ \ x ,\]  admits a unique strong solution and its solution $X $ is distributed at every point in time $t$ as a uniform distribution concentrated in $[-b(t),b(t)]$. We thus have a conic diffusion martingale with the cone expansion controlled by the time function $b$.
Moreover,
one can show that the solution processes spend zero time at the boundaries $-b$ and $b$. \end{theorem}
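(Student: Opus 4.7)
The plan is to establish, in order, (i) strong existence and pathwise uniqueness for the SDE, (ii) identification of the time-$t$ marginal as $\mathcal U([-b(t),b(t)])$, and (iii) that the set $\{t\in[0,T]:|X_t|=b(t)\}$ has Lebesgue measure zero almost surely.

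For (i), I plan to invoke the Yamada--Watanabe framework. Weak existence follows from Skorokhod's theorem after noting that $\sigma$, extended by $\sigma(x,0):=0$, is continuous on $[0,T]\times\R$ and uniformly bounded: indeed $\sigma(x,t)^2 \leq \dot b(t)\,b(t)$, which is finite because $\dot b$ is bounded on $(0,T]$ and $b$ is continuous, and this bound tends to $0$ as $t\downarrow 0$, giving continuity at $t=0$. For pathwise uniqueness the key step is a uniform H\"older-$1/2$ estimate in $x$: factoring $\sigma(x,t) = \sqrt{\dot b(t)/b(t)}\,\sqrt{b(t)^2 - x^2}\,\ind_{\{|x|\le b(t)\}}$ and using $\bigl|\sqrt{b^2-x^2}-\sqrt{b^2-y^2}\bigr| \leq \sqrt{|x^2-y^2|} \leq \sqrt{2b}\sqrt{|x-y|}$ on $[-b,b]$, with analogous estimates when one of the points lies outside the interval, yields $|\sigma(x,t)-\sigma(y,t)| \leq \sqrt{2\,\|\dot b\|_\infty}\,\sqrt{|x-y|}$, a bound \emph{uniform in $t$}. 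Since $\int_{0+} u^{-1}\,du=+\infty$, Yamada--Watanabe gives pathwise uniqueness, and combined with weak existence the SDE admits a unique strong solution.

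For (ii), I would use the moment method. The preliminary subtask is to show $X_t\in[-b(t),b(t)]$ a.s.: applying It\^o--Tanaka to $(X_t^2-b(t)^2)^+$ and observing that on $\{|X_t|>b(t)\}$ the diffusion coefficient vanishes by construction while the drift of $X_t^2-b(t)^2$ reduces to $-2b(t)\dot b(t)\leq 0$ rules out escape from the envelope. Once this confinement is in hand, applying It\^o to $X_t^n$ (whose martingale part is a true martingale thanks to the boundedness of $\sigma$ and the BDG inequalities) and taking expectations yields the closed recursion
\[ \dot m_n(t) \;=\; \tfrac{1}{2}n(n-1)\,\frac{\dot b(t)}{b(t)}\bigl(b(t)^2\,m_{n-2}(t)-m_n(t)\bigr), \qquad m_0\equiv 1,\ m_n(0)=0\ (n\ge 1). \]
Direct substitution shows that the uniform moments $m_n^U(t)=b(t)^n/(n+1)$ for even $n$ and $m_n^U\equiv 0$ for odd $n$ solve the same system. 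Induction on $n$, combined with uniqueness for the resulting first-order linear ODE (integrating factor $b^{n(n-1)/2}$) and the vanishing initial condition, gives $m_n\equiv m_n^U$. Since the target law has compact support it is moment-determinate, whence $X_t\sim\mathcal U([-b(t),b(t)])$.

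Claim (iii) is then immediate from Fubini and the continuity of the uniform marginal:
\[ \E\!\int_0^T \ind_{\{|X_t|=b(t)\}}\,dt \;=\; \int_0^T \Pb\bigl(|X_t|=b(t)\bigr)\,dt \;=\; 0. \]
I expect the main obstacle to be the confinement argument inside (ii): the initial condition $X_0=0$ sits on the degenerate boundary $b(0)=0$, and $\dot b(t)/b(t)$ may blow up as $t\downarrow 0$, so one must localise away from $t=0$, handle the It\^o--Tanaka local-time contribution (which only accumulates on $\{|X|=b\}$, where the drift already points inward), and then pass to the limit before equating $m_n$ with the true moments of $X^n$. Once $X$ is rigorously confined to the conic support, the moment recursion and the Fubini step are routine.
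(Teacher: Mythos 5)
Your parts (i) and (ii) follow essentially the same route as the paper: linear growth from the uniform bound $\sigma^2(x,t)\le \dot b(t)b(t)$ together with the $t$-uniform H\"older-$1/2$ estimate $|\sigma(x,t)-\sigma(y,t)|\le\sqrt{2\|\dot b\|_\infty}\sqrt{|x-y|}$ (the paper checks the same case-by-case inequality and cites Karatzas--Shreve rather than spelling out Yamada--Watanabe), and the uniform marginal is identified exactly as in the paper's Appendix~\ref{app:moments} via the It\^o moment recursion and moment-determinacy of compactly supported laws; your It\^o--Tanaka confinement of $(X_t^2-b(t)^2)^+$ is in fact a more careful version of the paper's one-line ``the diffusion coefficient vanishes at the boundary and $b$ is increasing'' argument, and it does work since $\langle X^2-b^2\rangle$ does not accumulate on $\{X^2\ge b^2\}$, so the local-time term vanishes. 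The genuine divergence is claim (iii): you dispose of it by Fubini, $\E\int_0^T\ind_{\{|X_t|=b(t)\}}dt=\int_0^T\Pb(|X_t|=b(t))\,dt=0$, which correctly proves the literal statement that the Lebesgue measure of time spent on the boundary is a.s.\ zero, and is much cheaper than what the paper does. The paper, however, interprets ``zero time at the boundaries'' through Theorem~\ref{th:attain}, which proves the stronger assertion that the semimartingale \emph{local time} of $b(t)-X_t$ (resp.\ $X_t+b(t)$) at level $0$ vanishes, via an extended occupation-time formula and a lower bound on $b/\dot b$ away from $t=0$. Zero occupation time does not imply zero local time, so your argument buys the occupation-time statement for free but does not recover the local-time result the authors actually establish and rely on; if the theorem is read in the paper's sense, that final step would need the occupation-formula argument (or an equivalent) rather than Fubini.
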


\begin{proof}

By continuity of diffusion paths, the solution $X$ to the SDE~(\ref{eq:martinX2}), if it exists, belongs to $[-b(t),b(t)]$ almost surely, \mb{since the square root must be well defined}. Indeed, the diffusion coefficient $\sigma(t,x)$ vanishes at the boundaries $\{-b(t),b(t)\}$. Because $b(t)$ is increasing, the process cannot exit the cone $[-b(t),b(t)]$.

It remains to prove that the solution $X$ to~(\ref{eq:martinX2}) exists and is unique. To that end, it is enough to show that
$\sigma(x,t)$ satisfies the linear growth bound and is Holder-$1/2$ for all $t\in[0,T]$~\cite{Kara05}.

Clearly, $\sigma(x,t)$ in (\ref{eq:dc}) satisfies the linear growth bound since it is uniformly bounded on $[0,T]$. To see this, notice that
\[ 0 \le \sigma^2(x,t)=\ind_{\{ -b(t)\leq x\leq b(t)\}}(\dot{b}(t)/b(t))(b^2(t)-x^2)\leq \dot{b}(t)b(t) \ \ \mbox{for all} \ \ x, \]
and that $\dot{b}(t)b(t)$ is bounded on $(0,T]$ by assumption, with zero limit when $t \downarrow 0$.  This allows us to conclude that
\[ \lim_{t \downarrow 0} \sigma^2(x,t) = 0 \ \ \mbox{for all} \ \ x.\]
Since $\sigma(x,t)^2$ is continuous and bounded on $(0,T]$ with the above limit, it admits a continuous extension at $t=0$ taking value zero. The extended $\sigma(x,t)$  is unique and  uniformly bounded on $[0,T]$.

We now proceed with the Holder continuity of $\sigma$. 
Of course, $f(x)=\sqrt{\vert x\vert}$ is Holder-$1/2$ on $\mathbb{R}$ since $\vert\sqrt{\vert x\vert}-\sqrt{\vert y\vert}\vert\leq \sqrt{|x-y|}$ for all $x,y$. We now check  that $\sigma(t,x)$  is  Holder-$1/2$ uniformly in $t>0$ ($t=0$ is not a problem given the above extension by continuity). See also~\cite{vrinsjeanblanc}).

Define $I(t):= [-b(t),b(t)]$. We check the possible cases.
\begin{enumerate}
\item If  $x, y \notin I(t)$, the diffusion coefficient vanishes and one gets $\vert \sigma (t,x)-\sigma (t,y)\vert =0$
\item If $x, y \in I(t)$, using the Holder-$1/2$ continuity of $\sqrt{|x|}$ :
\beqn
\vert \sigma (t,x)-\sigma (t,y)\vert &=&\sqrt{\frac{\dot{b}(t)}{b(t)}} \vert \sqrt{b^2(t)-x^2}-\sqrt{b^2(t)-y^2}\vert\nonumber\\
&\leq& \sqrt{\frac{\dot{b}(t)}{b(t)}}\sqrt{\vert (b^2(t)-x^2)-(b^2(t)-y^2)\vert}\nonumber\\
&=&\sqrt{\frac{\dot{b}(t)}{b(t)}}\sqrt{\vert y^2-x^2\vert}\leq \sqrt{\frac{\dot{b}(t)}{b(t)}}\sqrt{\vert y+x\vert}\sqrt{\vert y-x\vert}\leq \sqrt{\frac{\dot{b}(t)}{b(t)}} \sqrt{2b(t)}\sqrt{\vert x-y\vert}\nonumber\\
&=& \sqrt{2\dot{b}(t)}\sqrt{\vert x-y\vert}
\eeqn
and we are done since $\dot{b}$ is assumed to be bounded in $(0,T]$.

\item If $x\in I(t),\; y>b(t) $: $$\vert \sigma (t,x)-\sigma (t,y)\vert=\vert \sigma (t,x)\vert=\sqrt{\frac{\dot{b}(t)}{b(t)}}\sqrt{b^2(t)-x^2}= \sqrt{\frac{\dot{b}(t)}{b(t)}}\sqrt{b(t)+x}\sqrt{b(t)-x}\leq$$
$$\le \sqrt{\frac{\dot{b}(t)}{b(t)}}\sqrt{2 b(t)} \sqrt{b(t)-x} \le \sqrt{2 \dot{b}(t)} \sqrt{ \vert x-y\vert} $$
and again we are done since $\dot{b}$ is bounded in $t$.
\item If $x\in I(t),\; y<-b(t) $ (so that $-y>b(t)$) :
\beqn
\vert \sigma (t,x)-\sigma (t,y)\vert &=&\vert \sigma (t,x)\vert\leq \sqrt{\frac{\dot{b}(t)}{b(t)}} \sqrt{b(t)-x}\sqrt{b(t)+x} \leq  \nonumber \\ &\leq& \sqrt{\frac{\dot{b}(t)}{b(t)}} \sqrt{2b(t)}\sqrt{x+b(t)}\leq  \sqrt{2 \dot{b}(t)}\sqrt{x-y}\nonumber
\eeqn

\item The case $x\notin I(t),\; y  \in I(t) $  is similar to steps 3 and 4.
\end{enumerate}

Hence, the solution $X$ to~(\ref{eq:martinX2}) exists and is unique. Because it is bounded and evolves between $-b(t)$ and $b(t)$, it is a conic $[-b(t),b(t)]$-martingale.

Finally, the fact that solutions spend zero time at the boundaries $-b$ and $b$ will be proven in Theorem \ref{th:attain} below.

\end{proof}

\begin{remark}[Indicator function in the diffusion coefficient] We notice that the diffusion coefficient vanishes for $x=\pm b(t)$, that diffusion paths are continuous and that the boundary is expanding. It follows that even if we omit the indicator in the diffusion coefficient expression, the related SDE will not leave the cone $[-b,b]$. Therefore, one could omit the indicator whenever the diffusion coefficient is featured inside a SDE.
\end{remark}

We have proven that the SDE \eqref{eq:martinX2} has a unique strong solution. The SDE itself has been obtained by inverting the Kolmogorov equation for a uniform marginal density at time $t$ in $[-b(t),b(t)]$, so we expect the density of the solution to be that uniform distribution. However, we haven't proven that the forward Kolmogorov equation for the density of \eqref{eq:martinX2} has a unique solution.  To prove that our SDE \eqref{eq:martinX2} has the desired uniform distribution, one resorts to a characterization of  the uniform distribution by its moments, showing that the moments of the solution of \eqref{eq:martinX2} are the same as the moments of the desired uniform law, and showing that this characterizes the uniform law. The latter is clearly related to Carleman's theorem, as it is well known that having uniformly bounded moments, the continuous uniform distribution on an interval $[a,b]$ with finite $a,b\in \R$ is determined by its moments, see for example Chapter 30 of \cite{billingsley95}. This proof is straightforward but we include it in Appendix \ref{app:moments} for completeness. A different approach is using Theorem \ref{th:existnouniq} below, since that is enough to guarantee a uniform distribution.

The special case $b(t) = k t$ gives us a conic martingale with uniform distribution where the boundaries grow symmetrically and linearly in time.  This example was considered originally in \cite{BrigoIMI} and is also in \cite{peacock} (see for instance ex. 6.5 p.253 with $\varphi(x)=x$ and \mb{$f(z)=1/2\ind_{\{-1\leq z\leq 1\}}$}). More generally, our result allows to treat the case $b(t) = t^\alpha$, for $\alpha \ge 1$. Staying in the class of boundaries $t^\alpha$, we see that the case $\alpha < 1$ violates our assumptions, since in that case $\dot{b}$ is not bounded in $0$, and has to be dealt with differently. For $1/2 \le \alpha<1$, and with the square root case in mind in particular, we now introduce a different approach to prove existence (but not uniqueness) of the SDE solution, as done in the peacock processes literature \cite{peacock}.

\begin{theorem}[Existence of Solution for SDE solving Problem \ref{pr:conic} under milder conditions on the boundary]\label{th:existnouniq}
Let $b$ a continuous strictly increasing function defined on $[0,T]$ and of class $C^1$ in $(0,T]$, with $b(0)=0$ and $T$ a positive real number. Assume  $b \dot{b}$ to be bounded in $(0,T]$.
The stochastic differential equation \eqref{eq:martinX2}, namely
\begin{equation*}
d X_t = \ind_{\{X_t \in [-b(t),b(t)]\}}\left( \frac{\dot{b}(t)}{b(t)} (b^2(t) - X_t^2)\right)^{1/2} dW_t,\ \  t>0, \ \  X_0=0,
\end{equation*}
 admits a weak solution that is unique in law and its solution $X $ is distributed at every point in time $t$ as a uniform distribution concentrated in $[-b(t),b(t)]$. We thus have a conic diffusion martingale with the cone expansion controlled by the time function $b$. If moreover $\dot{b} b$ admits a finite limit for $t \downarrow 0$ one can show that the solution processes spend zero time at the boundaries $-b$ and $b$.
\end{theorem}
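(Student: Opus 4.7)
The plan is to establish, in turn, weak existence, identification of the one-dimensional marginal, and uniqueness in law; the zero-boundary-time assertion I would reduce to Theorem~\ref{th:attain}.

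For weak existence I would localize away from $t=0$. From $\sigma(x,t)^2 \le b(t)\dot b(t)$ and the hypothesis that $b\dot b$ is bounded on $(0,T]$, the coefficient $\sigma$ is uniformly bounded on $(0,T]\times\R$, and it is continuous there since $b\in C^1((0,T])$ and $b>0$ on that set. Unlike in Theorem~\ref{th:exist}, $\sigma$ need not extend continuously to $t=0$, so I would apply Skorokhod's existence theorem for SDEs with bounded continuous coefficients (e.g.\ Karatzas--Shreve Thm.~5.4.22) on each $[\epsilon,T]$ to produce a weak solution $X^{(\epsilon)}$ with $X^{(\epsilon)}_\epsilon=0$, extended by $0$ on $[0,\epsilon]$. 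Uniform boundedness of $\sigma$ yields Kolmogorov-type moment bounds and hence tightness of $(X^{(\epsilon)})$ on $C([0,T])$; by a standard martingale-problem argument any subsequential weak limit as $\epsilon\downarrow 0$ is a weak solution of \eqref{eq:martinX2} on $[0,T]$ with $X_0=0$.

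For the marginal I would mimic the moment-matching argument invoked in Appendix~A. Writing $m_n(t):=\E[X_t^n]$, applying It\^o's formula to $X_t^n$ and taking expectations (the local martingale term has zero expectation by the boundedness of $X$ and $\sigma$) yields
\[
\dot m_n(t) = \tfrac{n(n-1)}{2}\,\tfrac{\dot b(t)}{b(t)}\bigl(b(t)^2\, m_{n-2}(t)-m_n(t)\bigr),\qquad m_n(0)=0.
\]
Using the fact that $-X$ is also a weak solution (which kills odd moments) and proceeding by induction on $n$ forces $m_{2k}(t)=b(t)^{2k}/(2k+1)$, the even moments of the uniform law on $[-b(t),b(t)]$. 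Since this law is compactly supported it is determined by its moments (Carleman's criterion, see \cite{billingsley95}), so the marginal of $X_t$ is uniform on $[-b(t),b(t)]$ for every $t$.

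The main obstacle is uniqueness in law, because the Holder-$1/2$-in-$x$ estimate in the proof of Theorem~\ref{th:exist} required $\dot b$ bounded on $[0,T]$, whereas here only $b\dot b$ is bounded (allowing, e.g., $b(t)=\sqrt t$, for which $\dot b$ blows up at $0$). I would handle this again by localization: on any $[\epsilon,T]$ we have $b\ge b(\epsilon)>0$ and $\dot b$ continuous hence bounded, so the computation of Theorem~\ref{th:exist} shows that $\sigma$ is Holder-$1/2$ in $x$ uniformly on $[\epsilon,T]$, and the Yamada--Watanabe theorem delivers pathwise, and a fortiori weak, uniqueness of the SDE on $[\epsilon,T]$ starting from any prescribed initial law at time $\epsilon$. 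Combined with the uniformity of $X_\epsilon$ established in the previous step, this pins down the joint law of $(X_t)_{t\in[\epsilon,T]}$ for every $\epsilon>0$. Letting $\epsilon\downarrow 0$ and using path continuity together with $X_0=0$ a.s., the finite-dimensional distributions $(X_{t_1},\ldots,X_{t_n})$ at any $0<t_1<\cdots<t_n\le T$ are pinned down (take $\epsilon<t_1$), characterising the law on $C([0,T])$. The final claim on zero occupation at the boundaries under the extra hypothesis $\dot b\, b\to\ell<\infty$ I would defer to Theorem~\ref{th:attain}.
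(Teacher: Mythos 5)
Your route is genuinely different from the paper's, and in outline it works. The paper's proof is constructive, following \cite{peacock}: it takes the stationary diffusion $Y$ on $[-1,1]$ with uniform marginals, driven by a Brownian motion indexed by all of $\R$, and sets $X_t=b(t)Y_{\gamma(t)}$ with $\gamma(t)=2\ln b(t)$; the uniform marginals are then automatic, the true-martingale property follows from $\sigma^2\le b\dot b\le C$, and both the behaviour of $X_t$ as $t\downarrow 0$ and uniqueness in law are imported from the peacock monograph. You instead build a weak solution by compactness (Skorokhod existence on $[\epsilon,T]$, tightness, martingale problem), identify the marginals through the moment ODEs plus moment-determinacy of compactly supported laws, and prove uniqueness in law by localizing the H\"older-$1/2$ estimate of Theorem \ref{th:exist} on $[\epsilon,T]$ (where $\dot b$ is bounded), invoking Yamada--Watanabe there, and concatenating with the forced uniform law at time $\epsilon$. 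What your approach buys is a self-contained uniqueness argument (the paper only cites \cite{peacock} for this) and the stronger observation that \emph{any} weak solution confined to the cone is forced to have uniform marginals; what the paper's construction buys is an explicit solution, marginals for free, and a clean treatment of the delicate limit $X_t\to 0$ as $t\downarrow 0$, which in your scheme is buried in the identification step --- note your approximants solve the equation only on $[\epsilon,T]$, so the limit is a solution on every $[a,T]$, $a>0$, together with continuity and $X_0=0$, which is precisely the sense in which the paper interprets \eqref{eq:martinX2}, but you should state this explicitly.

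Two steps need repair. First, ``$-X$ is also a weak solution, which kills odd moments'' is circular at that stage: without uniqueness in law (not yet available across $t=0$) you cannot conclude that $X_t$ and $-X_t$ have the same law. The fix is elementary: $X$ is a bounded local martingale null at $0$, hence a true martingale, so $m_1\equiv 0$; for odd $n\ge 3$ the same ODE induction as in the even case gives $m_n\equiv 0$. Second, the moment ODE has the coefficient $\dot b/b$, which may fail to be integrable at $0$ (e.g.\ $b(t)=\sqrt t$), so the initial condition $m_n(0)=0$ does not by itself select the solution via a Gronwall-type argument; you must invoke the a priori bound $|m_n(t)|\le b(t)^n$ (confinement in the cone) to discard the homogeneous solutions, which are proportional to $b(t)^{-n(n-1)/2}$ and blow up as $t\downarrow 0$. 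With these fixes, and taking care that the marginal argument is phrased for an \emph{arbitrary} weak solution (your uniqueness step needs the law of $X_\epsilon$ for any solution, not just the constructed one), your plan goes through; the boundary occupation claim is deferred to Theorem \ref{th:attain}, exactly as in the paper.
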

\begin{proof}

By continuity of diffusion paths, the solution $X$ to the SDE~(\ref{eq:martinX2}), if it exists, belongs to $[-b(t),b(t)]$ almost surely.  The solution  of  \eqref{eq:martinX2}  has to be understood in a first step as a process satisfying, for any $t\geq \epsilon >0$
  $$X_t = X_\epsilon+\int_\epsilon^t  \ind_{\{x \in [-b(s),b(s)]\}}\left( \frac{\dot{b}(s)}{b(s)} (b^2(s) - X_s^2)\right)^{1/2} dW_s$$
where $X_\epsilon$ has a uniform law in $[-b(\epsilon),b(\epsilon)]$. The value of $X$ at time 0 is defined by continuity when $\epsilon$ goes to zero (we will prove that the limit exists), and  \eqref{eq:martinX2}  can be written   $X_t = \int_0^t  \ind_{\{X_s \in [-b(s),b(s)]\}}\left( \frac{\dot{b}(s)}{b(s)} (b^2(s) - X_s^2)\right)^{1/2} dW_s$ which has a meaning even if $\sigma (0,x)$ is not well defined.  The diffusion coefficient $\sigma(t,x)$ vanishes at the boundaries $\{-b(t),b(t)\}$ and because $b$ is increasing, it follows that $X_t\in[-b(t),b(t)]$ for all $t\geq 0$.  

It remains to prove that a solution $X$ to~\eqref{eq:martinX2} exists. We follow the methodology introduced in \cite{peacock}, see in particular Lemma 6.8 for the case where $h$ is the density of a uniform law on [-1,+1], and $a_h$ is defined in (6.49).  In this work the authors introduce a process $Y=(Y_t)_{t\in \R}$  such that, for all $t\geq s$
$$Y_t=Y_s -\frac{1}{2} \int_s^t   Y_u du+\frac{1}{\sqrt 2}\int_s^t  \sqrt{1-Y^2_u} dB_u$$
with marginals having uniform distribution  on [-1,+1], where $B$ is a Brownian motion on $\R$ (not merely $\R^+$), meaning that it is a process with continuous paths and stationary independent increments .
%
 Then, setting
 \begin{equation}\label{eq:XYgamma} X_t=  b(t)Y_{\gamma(t)}
 \end{equation}  for $t>0$, where $\gamma$ is an increasing differentiable  function, leads to a process with uniform marginals on $[-b(t),b(t)]$ (since by construction $Y_{\gamma(t)}$ has a uniform law). It remains to find $\gamma$ making $X$ a martingale with the prescribed dynamics.
Using \cite[lemma 5.1.3.]{3m}, and defining   $\beta( y):=  \frac{1}{\sqrt 2}\sqrt{1-y^2}$ and $U$ as $U_t:=\int_s
^{\gamma(t)}  \beta (Y_u)dB_u$, there exists a $\mathbb{F}=({\cal{F}}_{\gamma(t)})_{t\geq 0}$ Brownian motion $W$ such that
$$dU_t= \beta ( Y_{\gamma(t)})\, \sqrt{\dot{\gamma} (t)} dW_t\;.$$ It follows that
\begin{equation}\label{ygamma}  d_t Y_{\gamma(t)} = -\frac{1}{2} Y_{\gamma (t)}\dot{\gamma}(t) dt + \beta (Y_{\gamma (t)})\,\sqrt{\dot {\gamma }(t)}
 dW_t \end{equation} and by integration by parts
\beq
dX_t=
b(t) \beta ( Y_{\gamma(t)})\, \sqrt{\dot{\gamma}(t)}dW_t\label{eq:martinX3}
\eeq and the process $X$ is a local martingale. Equating the diffusion coefficient of \eqref{eq:martinX2} to that of \eqref{eq:martinX3} yields to identifying $\dot{\gamma} (t)= 2\frac{\dot{b}(t)}{b(t)}$ so that a valid choice for our time-change process is $\gamma (t)=2 \ln b(t)$.
The process $X$ is a true martingale: indeed by assumption on the boundedness of $b \dot {b}$
$$\sigma^2(x,t)=\ind_{\{ -b(t)\leq x\leq b(t)\}}(\dot{b}(t)/b(t))(b^2(t)-x^2)\leq \dot{b}(t)b(t)\leq C $$
and hence  $$\E\left[ \left(\int_ s ^t  \sigma (u, X_u)dW_u\right)^2\right]= \E\left[ \int_ s ^t  \sigma^2 (u, X_u)du \right] \leq  C (t-s).$$
It remains to prove that $X_t=b(t) Y_{2\ln b(t)}$ goes to 0 a.s. when $t$ goes to 0  which is similar to the proof given in \cite{peacock}. Again in \cite{peacock} it is shown that one has uniqueness in law and the argument can be straightforwardly repeated for our process here.
Finally, the claim on the time spent at the boundaries is proven in Theorem \ref{th:attain}
.
\end{proof}

%

\section{Mean reverting uniform diffusions with constant boundaries}\label{sec:meanrev}
In this paper we define mean reversion as follows.
A real-valued squared-integrable  Markov process $(\xi_t(\omega))_t$ mean reverts towards a long term mean $\bar{\theta} \in \mathbb{R}$ if the following holds: for all $s$ in the time domain of the process and all possible values $\bar{\xi}$ for the process at time $s$, one has
\[ \lim_{t \uparrow \infty} \mathbb{E}[\xi_t | \xi_s = \bar{\xi}] = \bar{\theta} \]
where $\bar{\theta}$ is a deterministic constant.
This condition implies that wherever the process state is found at a given future time, the long term mean from that time onward is a constant deterministic value that does not depend on the chosen time and state. We also require
$\lim_{t \uparrow \infty} \mathbb{V}ar(\xi_t)$ to exist finite.

Mean reversion is an important property that tells us that the process expectation  tends to forget a specific initial condition in the long run from any past time. However, it is a special case of a more general property. If we assume that the process has a density with respect to the Lebesgue measure at all times $t>0$, denote by
\[  p_{\xi_t|\xi_s}(x;y)\ dx = \mathbb{P}\{ \xi_t \in dx | \xi_s = y\} \]
the conditional density of $\xi_t$ at $dx$ given $\xi_s=y$, with $s<t$. We have that the whole law forgets earlier conditions if
\[ \lim_{t \uparrow +\infty} p_{\xi_t|\xi_s}(x;y) \ \mbox{exists, is a density in $x$ and depends neither on} \ s \ \mbox{nor on} \ y.\]
We now focus on mean reversion and will get to the general law later in Section \ref{sec:ergodicth}.

Take ${t_0} >0$ and consider the solution of the SDE \eqref{eq:martinX2} for $t \ge {t_0}$.  If one starts from $X$, solution of  \eqref{eq:martinX2}, one immediate way to obtain a diffusion with a standard uniform distribution at all times is to re-scale $X_t$ by $b(t)$. We will see that this leads in particular to a simple mean-reverting linear drift. This does not mean however that this is the only way to obtain a mean reverting uniform diffusion, there are many others. Indeed, it would be enough to set for example $Z_t:=2\Phi\left(\frac{W_t}{\sqrt{t}}\right)-1$ to obtain a standard uniform process, see Appendix \ref{app:sampling} and the related discussion. \rosso{We notice en passant that bounded stochastic processes received surprisingly little attention in the literature (see e.g. the Jacobi process or the $\Phi$-martingale in~\cite{gour02,vrinsjeanblanc,Carr17}).}

Define the re-scaled process
\[ Z_t= X_t / b(t) , \ \ X_t = b(t) Z_t \ \ \mbox{for} \ \ \ t\ge {t_0}, \] \mb{i.e. with the notation of the previous section, $Z_t=Y_{\gamma_t}$.}
Since for all $t>0$ the random variable $X_t$ has a  uniform law in $[-b(t),b(t)]$, $Z_t$ has a uniform law in $[-1,1]$ for all $t\ge {t_0}$.
We can derive the SDE for $Z_t$, $t \ge {t_0}$, using integration by parts and use that dynamics to define a new process $\tZ$:
\[ d \tZ_t = - \frac{\dot{b}(t)}{b(t)} \tZ_t dt + \left( \frac{\dot{b}(t)}{b(t)}(1-\tZ_t^2)    \right)^{1/2} \ind_{\{\tZ_t\in [-1,1]\}}  dW_t, \ \ t \ge {t_0},\ \   \tZ_{t_0} := \zeta \sim U([-1,1]). \]
Thus, with this deterministic re-scaling, we have a process $Z$ with fixed uniform distribution and fixed boundaries.
Here we assume the initial condition $\zeta$ to be independent of the driving Brownian motion.

If instead we aim to obtain a standard uniform in $[0,1]$, we adopt a slightly different transformation:
\[ \bar{Y}_t= (X_t / b(t) +1)/2 = X_t / (2 b(t)) +1/2 \]
from which
\[ X_t = 2 b (\bar{Y}_t - 1/2) .\]
By Leibnitz's rule
we have the following
\begin{theorem}
Assumptions on $b$ as in Theorem \ref{th:exist} but extended to all $T$:
let $b$ be a strictly increasing function defined on $[0,+\infty)$, continuous  and continuously differentiable in $(0,+\infty)$.  Assume  $b(0)=0$. Assume  $\dot{b}$ to be bounded in $(0,T]$ for all $T>0$. Assume further that $\lim_{t\uparrow +\infty} b(t) = +\infty$.  Consider, for $t \ge {t_0}$, the SDEs
\[ d\bar{Y}_t  = \frac{\dot{b}(t)}{ b(t)} (1/2- \bar{Y}_t) dt+ \frac{1}{2 b}\left(\ind_{\{\bar{Y}_t \in (0,1)\}} b(t)\dot{b}(t) (1 -
4( \bar{Y}_t - 1/2)^2)\right)^{1/2} dW_t, \ \ \bar{Y}_{t_0} = \xi \sim U([0,1]) \]
and
\begin{equation}\label{eq:unifZ} d \widetilde{Z}_t = - \frac{\dot{b}(t)}{b(t)} \widetilde{Z}_t dt + \left( \frac{\dot{b}(t)}{b(t)}(1-\widetilde{Z}_t^2)    \right)^{1/2} \ind_{\{\widetilde{Z}_t\in [-1,1]\}}  dW_t, \ \ \widetilde{Z}_{t_0} = \zeta \sim U([-1,1])
\end{equation}
with $\xi$ and $\zeta$ independent of $W$.
The unique solution of these SDEs mean-revert to $1/2$ and $0$ respectively with reversion speed (defined as minus the drift rate)  $\dot{b}/b$ and are distributed at any point in time as a standard uniform random variable and as a uniform $[-1,1]$ random variable respectively.
\end{theorem}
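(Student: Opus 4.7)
The plan is to transfer the content of Theorem \ref{th:exist} from the conic martingale $X$ to its affine re-scalings, and then read off mean reversion directly from the resulting linear drift. Let $X$ solve \eqref{eq:martinX2} on $[0,T]$ for every $T>0$; strong existence and uniqueness hold by Theorem \ref{th:exist} under the present hypotheses on $b$. For $t \ge t_0 > 0$ define $\tZ_t := X_t/b(t)$ and $\bar{Y}_t := (\tZ_t + 1)/2$. Since $b(t_0) > 0$ and $b$ is $C^1$ on $(0,\infty)$, the deterministic map $t\mapsto 1/b(t)$ has finite variation on $[t_0,T]$, so integration by parts gives
\[ d\tZ_t = -\frac{\dot{b}(t)}{b(t)}\, \tZ_t\, dt + \frac{1}{b(t)}\, \sigma(X_t,t)\, dW_t, \]
with no quadratic-covariation correction term. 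Substituting $X_t = b(t)\tZ_t$ into \eqref{eq:dc} and using that $X_t \in [-b(t),b(t)]$ almost surely recovers exactly the diffusion coefficient of \eqref{eq:unifZ}; the analogous substitution $X_t = 2b(t)(\bar{Y}_t - 1/2)$ produces the SDE stated for $\bar{Y}$.

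For existence and strong uniqueness of \eqref{eq:unifZ} in its own right, on any interval $[t_0,T]$ the drift $z \mapsto -(\dot{b}(t)/b(t))\, z$ is Lipschitz with coefficient $\sup_{[t_0,T]} \dot{b}/b$, which is finite because $b(t_0) > 0$ and $\dot{b}$ is bounded. The diffusion coefficient is a time-dependent scalar $\sqrt{\dot{b}(t)/b(t)}$ times $\sqrt{1 - z^2}\,\ind_{\{z\in[-1,1]\}}$, and is therefore H\"older-$1/2$ in $z$ uniformly in $t\in[t_0,T]$ by the same five-case analysis as in the proof of Theorem \ref{th:exist} (with the moving cone $[-b(t),b(t)]$ replaced by the fixed interval $[-1,1]$). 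The Yamada--Watanabe criterion then yields pathwise uniqueness, and strong existence follows by standard construction; the initial law $\zeta \sim U([-1,1])$ independent of $W$ agrees with $\tZ_{t_0} = X_{t_0}/b(t_0)$, whose law is uniform by Theorem \ref{th:exist}. The marginal-law assertion is now immediate: for every $t \ge t_0$, $X_t \sim U([-b(t),b(t)])$ pushes forward under the linear maps $x\mapsto x/b(t)$ and $x\mapsto (x/b(t)+1)/2$ to $U([-1,1])$ and $U([0,1])$ respectively.

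Finally, for mean reversion of $\tZ$, set $m(t) := \E[\tZ_t \mid \tZ_{t_0} = z]$. Uniform boundedness of the diffusion coefficient on $[t_0,T]$ makes the stochastic integral in \eqref{eq:unifZ} a genuine martingale, so taking conditional expectation gives $\dot{m}(t) = -(\dot{b}(t)/b(t))\, m(t)$, and therefore
\[ m(t) \;=\; z\, \exp\!\left(-\int_{t_0}^{t}\frac{\dot{b}(s)}{b(s)}\,ds\right) \;=\; z\,\frac{b(t_0)}{b(t)} \;\longrightarrow\; 0 \qquad (t\uparrow\infty), \]
using the standing assumption $b(t)\to \infty$. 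The finite-variance requirement of the mean-reversion definition is trivial: since $\tZ_t \sim U([-1,1])$, $\var(\tZ_t) = 1/3$ for every $t$, so the limit exists and equals $1/3$. The identical computation applied to $\bar{Y}$ yields mean reversion to $1/2$ with the same speed $\dot{b}/b$. The only mildly delicate point in the whole argument is verifying that the integration-by-parts step produces no quadratic-covariation term, but this is automatic because $1/b$ is deterministic of finite variation on $[t_0,T]$; beyond that, every assertion reduces to a direct transfer of properties already established for $X$.
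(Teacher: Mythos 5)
Your proposal is correct and follows essentially the same route as the paper: rescale the conic martingale $X$ of Theorem \ref{th:exist}, obtain the SDEs by integration by parts (no covariation correction since $1/b$ is deterministic and of finite variation), read off the uniform marginals from the pushforward of $U([-b(t),b(t)])$ under the affine maps, and obtain mean reversion from the linear ODE for the conditional mean, giving $z\,b(s)/b(t)\to 0$. The only differences are that you spell out the H\"older-$1/2$/Yamada--Watanabe uniqueness argument for \eqref{eq:unifZ} that the paper leaves implicit, and that you should run the conditional-mean computation from an arbitrary $s\ge t_0$ rather than only from $t_0$ to match the paper's definition of mean reversion (the identical calculation applies verbatim).
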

\begin{proof}
The proof is immediate. For the mean reverting behaviour, taking for example $\widetilde{Z}$, we note that $\lim_{t \uparrow +\infty} \mathbb{E}[\widetilde{Z}_t] =0$, and
$\lim_{t \uparrow +\infty} \mathbb{V}ar[\widetilde{Z}_t] = 1/3$. Actually, we are in a special case where mean and variance are constant. Furthermore, whenever $\widetilde{Z}_t$ is above the long term mean $0$, the drift is negative, pointing back to $0$, while the variance remains bounded. A similar symmetric pattern is observed when   $\widetilde{Z}_t$ is below zero.
We can further compute
\[ \lim_{t \uparrow +\infty} \mathbb{E}[\widetilde{Z}_t|\widetilde{Z}_s = z] = \lim_{t \uparrow +\infty} z \exp\left(-\int_s^t
\frac{\dot{b}(u)}{b(u)} du\right) =
\lim_{t \uparrow +\infty} z \exp\left(-\int_s^t
 d \ln b(u) \right) =
 \lim_{t \uparrow +\infty} z \frac{b(s)}{b(t)} = 0.
\]
We thus see that after a sufficiently long time the value $z$ at time $s$ is forgotten by the mean.
\end{proof}

\begin{remark} Note that mean reversion holds also under the weaker assumptions of Theorem \ref{th:existnouniq} similarly extended to $(0,+\infty)$, provided that again $\lim_{t \uparrow +\infty} b(t)= +\infty$. This is the case for example with $t^\alpha$ with $\alpha \in [1/2,1)$.
\end{remark}

We have shown above that mean reversion holds. In fact, we can say more than this, and we now analyze the limit behaviour of the process law and its exact transition densities.
\section{Exact transition densities, limit laws and ergodic properties}\label{sec:ergodicth}

We now study the transition densities and the limit laws of the process $Z$.

\subsection{The special case $b(t) = b_0 \exp(k t)$}\label{sec:ergodicspecial}

In the special case $b(t) = b_0 \exp(k t)$ with $b_0>0$ we need $X$ starting with\\  $X_{t_0} \sim U([-b_0 \exp(k t_0) ,b_0\exp(k t_0)])$. In this case we could also take $t_0=0$ since there is no singularity at time $0$. The setting is slightly different than our earlier setting because even with $t_0=0$ the cone would not start with a point but rather with the interval $[-b_0,b_0]$. In particular, the initial condition for $X$ would not be $X_0=0$; instead, $X_{0}$ would be requested to be a random variable with uniform law in $[-b_0,b_0]$.  In this case we have the special property that

\[ \dot{b}(t)/b(t) = k \]
is constant and the general SDE
\begin{equation}\label{eq:unifZ2} d \widetilde{Z}_t = - \frac{\dot{b}(t)}{b(t)} \widetilde{Z}_t dt + \left( \frac{\dot{b}(t)}{b(t)}(1-\widetilde{Z}_t^2)    \right)^{1/2} \ind_{\{\widetilde{Z}_t\in [-1,1]\}}  dW_t, \ \ \widetilde{Z}_{t_0} = \zeta \sim U([-1,1])
\end{equation}
is in fact a {\emph{time homogeneous diffusion}}
\begin{equation}\label{eq:unifZ3} d \widetilde{Z}_t = - k  \widetilde{Z}_t dt + \left( k (1-\widetilde{Z}_t^2)    \right)^{1/2} \ind_{\{\widetilde{Z}_t\in [-1,1]\}}  dW_t, \ \ \widetilde{Z}_{t_0} = \zeta \sim U([-1,1])
\end{equation}
to which we can apply standard boundary and ergodic theory techniques for time homogeneous one-dimensional diffusions, see for example \cite{karlin}.

Let's analyze Eq \eqref{eq:unifZ3} using the standard theory. First of all in this case we already know from our previous analysis of $X$ that, if $\bar{p}$ is the density of a $U([-1,1])$ random variable then $\bar{p}$ satisfies the Fokker Planck equation for the marginal density of the diffusion \eqref{eq:unifZ3} so that
\[ {\cal L}^* \bar{p} = 0 ,\]
where ${\cal L}^*$ is the forward diffusion operator of the Fokker Planck equation.
This means that $\bar{p}$ is the invariant measure for the diffusion \eqref{eq:unifZ3}.

This can be further confirmed by the standard calculation: given a diffusion process with drift $\mu$ and diffusion coefficient $\sigma$, under suitable conditions (see for example \cite{locherbach}) the invariant measure is proportional to

\[ \frac{2}{\sigma^2(x) \exp \left(-2 \int_{x_0}^x \frac{\mu(u)}{\sigma^2(u)} du\right)} \]
which, with our $\mu(x) = -k x$ and $\sigma(x) = \left( k (1-x^2)    \right)^{1/2}$ results immediately in a uniform density. Hence we have that the uniform is the invariant measure of our diffusion and that our diffusion is ergodic. We also have
\[ \lim_{t \uparrow +\infty} p_{Z_{t+s}|Z_s}(y;x)  = \lim_{t \uparrow \infty} p_{Z_{t}|Z_0}(y;x)  = \bar{p}(y) \ \ \mbox{for all} \ \ s>0, \ \ x \in [-1,1]. \]

\subsection{The general case with curved boundary}
Now we move to the case of the full $Z$ with general boundary $b(t)$ in Eq \eqref{eq:unifZ2}.

We already know that the density $\bar{p}$ satisfies the Fokker Planck equation for the marginal density of \eqref{eq:unifZ2}.
Given that $\partial \bar{p} / \partial t =0$ and that the Fokker Planck equation reads $\partial p_t / \partial t = {\cal L}^*_t p_t$ we deduce that
\[ {\cal L}^*_t \bar{p} =0 \]
for the operator $\cal L$ of \eqref{eq:unifZ2}. Hence $\bar{p}$ is also the invariant measure for the more general case \eqref{eq:unifZ2}. It's not clear beforehand however that the diffusion \eqref{eq:unifZ2} has a limit transition law.

To check this, we first derive its exact transition laws.
We have the following

\begin{theorem}[\mb{Moments}  for the time-inhomogeneous mean-reverting uniform diffusion \eqref{eq:unifZ2}] Let $\xi_{n}:=\hbox{mod}(n,2)$ stand for the \textit{odd indicator} and $\mu_n:=(1-\xi_{n})/(n+1)$ denote the $n$-th moment of a random variable uniformly distributed in $[-1,1]$. Then, the conditional moments $M_n(s,t;z) := \E[\tilde{Z}_t^n | \tilde{Z}_s = z], t\geq s$ are given by
\[
M_n(s,t;z)=\mu_n+\sum_{k=1}^{\frac{n+\xi_{n}}{2}}(-1)^{k}(z^{2k-\xi_{n}}-\mu_{2k-\xi_{n}})\sum_{j=k}^{\frac{n+\xi_{n}}{2}}\alpha_{j,k}[n](-1)^j\left(\frac{b(s)}{b(t)}\right)^{j(2(j-\xi_{n})+1)}
\]
with $\alpha[n]$ being $\frac{n+\xi_{n}}{2}$-by-$\frac{n+\xi_{n}}{2}$ lower triangular matrices (i.e. $\alpha_{j,k}[n]=0$ for all $k>j$) whose lower entries are defined as
\begin{equation}
\alpha_{j,k}[n]=\left\{\begin{array}{ll}
1&\hbox{ if }j=k=\frac{n+\xi_{n}}{2}\\
-(-1)^{\frac{n+\xi_{n}}{2}}\sum_{i=k}^{\frac{n+\xi_{n}}{2}-1}\alpha_{i,k}[n](-1)^i&\hbox{ if }j=\frac{n+\xi_{n}}{2},\; k<j\\
\frac{\alpha_{j,k}[n-2]n(n-1)}{n(n+1)-2j(2(j-\xi_{n})+1)}&\hbox{otherwise}
\end{array}\right.
\label{eq:sumcoeff}
\end{equation}

\end{theorem}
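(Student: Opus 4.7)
The plan is to reduce the statement to an upper-triangular linear ODE system for the moments, solve it by induction on $n$ (separately for even and odd $n$), and match the resulting closed form against the initial condition $M_n(s,s;z)=z^n$.

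\textbf{Step 1: derive the ODE system.} Applying Itô's formula to $f(\tilde{Z}_t)=\tilde{Z}_t^n$ and observing that $|\tilde Z|\le 1$ together with local boundedness of $\dot b/b$ on $[s,T]$ make the stochastic integral a genuine square-integrable martingale, the expectation given $\tilde Z_s=z$ yields for $t>s$
\begin{equation*}
\frac{\partial M_n(s,t;z)}{\partial t}=\frac{\dot b(t)}{b(t)}\Bigl[\tfrac{n(n-1)}{2}M_{n-2}(s,t;z)-\tfrac{n(n+1)}{2}M_n(s,t;z)\Bigr],\qquad M_n(s,s;z)=z^n.
\end{equation*}
The monotone time change $u:=\ln\bigl(b(t)/b(s)\bigr)\ge 0$ absorbs $b$ and turns this into a parity-decoupled, upper-triangular linear cascade with constant coefficients: even $n$ couples only to even $n-2$, odd $n$ only to odd $n-2$.

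\textbf{Step 2: ansatz driven by the spectrum.} Fix parity $\xi_n\in\{0,1\}$ and set $J:=(n+\xi_n)/2$. Because the $(J{+}1)$-dimensional truncated cascade is upper triangular, its eigenvalues are the diagonal entries $\lambda_j:=j(2(j-\xi_n)+1)$ for $j=0,1,\ldots,J$ (one checks these coincide with $m(m+1)/2$ for $m$ of parity $\xi_n$ and $0\le m\le n$). By linearity $M_n$ is an $\R$-linear combination of $\{e^{-\lambda_j u}\}_{j=0}^J$ with coefficients polynomial in $z$ of degree at most $n$ and parity $\xi_n$. The constant mode ($j=0$, $\lambda_0=0$) is forced to the uniform moment $\mu_n$, either by solving $\mu_n=\tfrac{n-1}{n+1}\mu_{n-2}$ directly or by invoking the invariance result of Section~\ref{sec:ergodicspecial}. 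Hence the most general allowed form is
\begin{equation*}
M_n=\mu_n+\sum_{k=1}^{J}(-1)^{k}(z^{2k-\xi_n}-\mu_{2k-\xi_n})\sum_{j=k}^{J}\alpha_{j,k}[n](-1)^{j}e^{-\lambda_j u},
\end{equation*}
where the basis $\{z^{2k-\xi_n}-\mu_{2k-\xi_n}\}_{k=1}^J$ is linearly independent since its elements have pairwise distinct degrees.

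\textbf{Step 3: mode matching in the interior.} Substituting the ansatz into the ODE and equating coefficients of each non-resonant mode $e^{-\lambda_j u}$ with $j<J$, then projecting on the basis in $z$, yields the scalar balance
\begin{equation*}
\Bigl(\tfrac{n(n+1)}{2}-\lambda_j\Bigr)\alpha_{j,k}[n]=\tfrac{n(n-1)}{2}\alpha_{j,k}[n-2],
\end{equation*}
which is exactly the ``otherwise'' branch of~\eqref{eq:sumcoeff} and lets me propagate the interior rows from $\alpha[n-2]$ (already known by induction) to $\alpha[n]$.

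\textbf{Step 4: resonance at $j=J$ and closure via the initial condition.} The main obstacle is that the top eigenvalue satisfies $\lambda_J=n(n+1)/2$, so it is resonant with the diagonal: the ODE collapses to $0=0$ for the top mode and fixes nothing in the row $\{\alpha_{J,k}[n]\}_{k=1}^J$. These entries must instead be pinned down by $M_n(s,s;z)=z^n$, i.e.\ by setting $u=0$. Using once more the linear independence of $\{z^{2k-\xi_n}-\mu_{2k-\xi_n}\}_{k=1}^J$, this reduces to $\sum_{j=k}^J(-1)^{j+k}\alpha_{j,k}[n]=\delta_{k,J}$, from which $\alpha_{J,J}[n]=1$ is immediate and, for $k<J$, one gets the explicit closure $\alpha_{J,k}[n]=-(-1)^J\sum_{i=k}^{J-1}(-1)^{i}\alpha_{i,k}[n]$, precisely the two remaining branches of~\eqref{eq:sumcoeff}. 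Rewriting $e^{-\lambda_j u}=(b(s)/b(t))^{j(2(j-\xi_n)+1)}$ gives the announced formula.
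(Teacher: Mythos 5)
Your proposal is correct, and it reaches the result by a genuinely different route than the paper. Both arguments start identically (It\^o's formula giving the recursion $\dot M_n=\tfrac{\dot b}{b}\bigl[\tfrac{n(n-1)}{2}M_{n-2}-\tfrac{n(n+1)}{2}M_n\bigr]$ and induction within a parity class), but the paper then writes the variation-of-constants integral representation of $M_{n+2}$ and verifies by explicit computation of the integrals (the $I_1,I_2,J_1,J_2$ bookkeeping) that the closed form with the $\alpha[n]$ recursion reproduces itself, the row-sum identity entering through the initial condition. You instead pass to the time change $u=\ln(b(t)/b(s))$, which turns the cascade into a constant-coefficient triangular linear system with distinct eigenvalues $\lambda_j=j(2(j-\xi_n)+1)$, and determine the coefficients by mode matching: the non-resonant modes give the ``otherwise'' branch of \eqref{eq:sumcoeff}, while the resonance $\lambda_J=n(n+1)/2$ explains structurally why the top row of $\alpha[n]$ is not fixed by the dynamics and must be closed by $M_n(s,s;z)=z^n$, which yields exactly the first two branches. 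This buys a cleaner, less computational argument that also explains where the exponents and the special role of the last row come from; the paper's verification is more pedestrian but entirely self-contained. One small imprecision to fix: calling your ansatz ``the most general allowed form'' overstates Step 2 --- lower triangularity of $\alpha[n]$ and the absence of $z$-free terms in the decaying modes do not follow from the spectral decomposition alone, but are either part of the induction hypothesis (the particular solution inherits them from $M_{n-2}$, and the resonant mode's coefficient has degree $n$) or are rendered irrelevant by invoking uniqueness of the solution of the linear ODE system with the given initial data, since you exhibit one solution of the correct form; also note that for odd $n$ the truncated system is $J$-dimensional with no zero eigenvalue, which is consistent with $\mu_n=0$. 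With those two remarks made explicit, your argument is complete.
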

Note: The explicit expressions for the first six moments are given in the appendix.

\begin{proof}

The dynamics of powers of $\tilde{Z}$ solving \eqref{eq:unifZ2} are easily found from Ito's formula. This yields the ODE governing the conditional expectations for all $n$. For $n=0$ one trivially has $M_0(s,t;z)=1$. Now, set $h(t):=\dot{b}(t)/b(t)$ satisfying
\[
\exp\left\{-\int_s^t h(u)du\right\}=\exp\left\{\int_t^s d\ln u\right\}=b(s)/b(t)\;.
\]

Hence,

\[
\dot{M}_1(s,t;z):=\frac{\partial M_1(s,t;z)}{\partial t}=-h(t)M_1(s,t;z) \hbox{ s.t. }M_1(s,s;z)=z
\]
which leads to $M_1(s,t;z)=z\frac{b(s)}{b(t)}$.

For $n\geq 2$, one gets a recursive first order inhomogeneous ODE

\begin{eqnarray}
\dot{M}_n(s,t;z) &=& -\frac{n(n+1)}{2} \frac{\dot{b}(t)}{b(t)} {M}_n(s,t;z) + \frac{n(n-1)}{2} \frac{\dot{b}(t)}{b(t)} {M}_{n-2}(s,t;z) \\
&=&\frac{n(n+1)}{2} h(t) \left( \frac{n-1}{n+1}  {M}_{n-2}(s,t;z)-{M}_n(s,t;z)\right)\;,\label{eq:Mnstz}
\end{eqnarray}

whose solution is

\begin{equation}
{M}_n(s,t;z) = z^n \left(\frac{b(s)}{b(t)}\right)^{n(n+1)/2} + \frac{n(n-1)}{2}\int_s^t \left(\frac{b(u)}{b(t)}\right)^{n(n+1)/2}  h(u) {M}_{n-2}(s,u;z) du\;. \label{MnstSOL}
\end{equation}

Notice that the expression above satisfies the initial conditions $M_{n}(s,s;z)=z^{n}$ for all $n$ in $2,3,\ldots$. This is also the case for the expression stated in the theorem as a result of the relationship between the entries of the $\alpha[n]$ matrices: as $b(s)/b(t)=1$ when $s=t$, the double sum collapses to the single $j=k=(n+\xi_{n})/2$ term. This concludes the check of the initial conditions.

Replacing $n$ by $n+2$ in the $M_n(s,t;z)$ expression given in the theorem yields

\begin{eqnarray}
M_{n+2}(s,t;z)&=&\mu_{n+2}+\sum_{k=1}^{\frac{n+2+\xi_{n}}{2}}(-1)^{k}(z^{2k-\xi_{n}}-\mu_{2k-\xi_{n}})\sum_{j=k}^{\frac{n+2+\xi_{n}}{2}}\alpha_{j,k}[n+2](-1)^j\left(\frac{b(s)}{b(t)}\right)^{j(2(j-\xi_{n})+1)}\\
&=&\mu_{n+2}+\sum_{k=1}^{\frac{n+\xi_{n}}{2}}(-1)^{k}(z^{2k-\xi_{n}}-\mu_{2k-\xi_{n}})\sum_{j=k}^{\frac{n+2+\xi_{n}}{2}}\alpha_{j,k}[n+2](-1)^j\left(\frac{b(s)}{b(t)}\right)^{j(2(j-\xi_{n})+1)}\nonumber\\
&&+(-1)^{\frac{n+2+\xi_{n}}{2}}(z^{n+2}-\mu_{n+2})\alpha_{\frac{n+2+\xi_{n}}{2},\frac{n+2+\xi_{n}}{2}}[n+2](-1)^{\frac{n+2+\xi_{n}}{2}}\left(\frac{b(s)}{b(t)}\right)^{\frac{n+2+\xi_{n}}{2}(n+3-\xi_{n})}\\
&=&\mu_{n+2}+(z^{n+2}-\mu_{n+2})\left(\frac{b(s)}{b(t)}\right)^{\frac{n+2}{2}(n+3)}+I_1+I_2\nonumber\\
I_1&:=&\sum_{k=1}^{\frac{n+\xi_{n}}{2}}(-1)^{k}(z^{2k-\xi_{n}}-\mu_{2k-\xi_{n}})\sum_{j=k}^{\frac{n+\xi_{n}}{2}}\alpha_{j,k}[n+2](-1)^j\left(\frac{b(s)}{b(t)}\right)^{j(2(j-\xi_{n})+1)}\\
I_2&:=&(-1)^{\frac{n+2+\xi_{n}}{2}}\left(\frac{b(s)}{b(t)}\right)^{\frac{n+2}{2}(n+3)}\sum_{k=1}^{\frac{n+\xi_{n}}{2}}(-1)^{k}(z^{2k-\xi_{n}}-\mu_{2k-\xi_{n}})\alpha_{\frac{n+2+\xi_{n}}{2},k}[n+2]
\end{eqnarray}

where we have used $\xi_{n}=\xi_{n}^2$ and $\alpha_{\frac{n+2+\xi_{n}}{2},\frac{n+2+\xi_{n}}{2}}[n+2]=1$ from \eqref{eq:sumcoeff} with $n\leftarrow n+2$.

It remains to check that this expression agrees with the solution \eqref{MnstSOL} when setting $n\leftarrow n+2$. The constant term trivially reads

\[
z^{n+2}\left(\frac{b(s)}{b(t)}\right)^{\frac{n+2}{2}(n+3)}\;.
\]

The integral can be split in two parts with respect to $M_n$. The first part of $M_n$ is $\mu_n$ and the second is the double sum. The first part is
\begin{eqnarray}
\frac{(n+2)(n+1)}{2}\int_s^t \left(\frac{b(u)}{b(t)}\right)^{(n+2)(n+3)/2}  h(u) \mu_n du  &=& \mu_n\frac{n+1}{n+3}\left(1-\left(\frac{b(s)}{b(t)}\right)^{(n+2)(n+3)/2}\right)\nonumber\\
&=& \mu_{n+2}\left(1-\left(\frac{b(s)}{b(t)}\right)^{(n+2)(n+3)/2}\right)
\end{eqnarray}

It remains to show that the remaining integral agrees with $I_1+I_2$ defined above. It comes

\[
\alpha_{jk}[n]\frac{(n+2)(n+1)}{2}\int_s^t \left(\frac{b(u)}{b(t)}\right)^{(n+2)(n+3)/2}  h(u) \left(\frac{b(s)}{b(u)}\right)^{j(2(j-\xi_{n})+1)} du
=J_1(j,k,n)+J_2(j,k,n)
\]

where

\begin{eqnarray}
J_1(j,k,n)&:=&\alpha_{j,k}[n+2]\left(\frac{b(s)}{b(t)}\right)^{j(2(j-\xi_{n})-1)}\;,\nonumber\\
J_2(j,k,n)&:=&-\alpha_{j,k}[n+2]\left(\frac{b(s)}{b(t)}\right)^{(n+2)(n+3)/2}\;.\nonumber
\end{eqnarray}

It is easy to see that

\[
\sum_{k=1}^{\frac{n+\xi_{n}}{2}}(-1)^{k}(z^{2k-\xi_{n}}-\mu_{2k-\xi_{n}})\sum_{j=k}^{\frac{n+\xi_{n}}{2}}(-1)^jJ_1(j,k,n)=I_1 .
\]

On the other hand,

\begin{eqnarray}
&&\sum_{k=1}^{\frac{n+\xi_{n}}{2}}(-1)^{k}(z^{2k-\xi_{n}}-\mu_{2k-\xi_{n}})\sum_{j=k}^{\frac{n+\xi_{n}}{2}}(-1)^jJ_2(j,k,n)\nonumber\\
&=&-\left(\frac{b(s)}{b(t)}\right)^{(n+2)(n+3)/2}\sum_{k=1}^{\frac{n+\xi_{n}}{2}}(-1)^{k}(z^{2k-\xi_{n}}-\mu_{2k-\xi_{n}})\sum_{j=k}^{\frac{n+\xi_{n}}{2}}(-1)^j\alpha_{j,k}[n+2]\nonumber\\
&=&(-1)^{\frac{n+2+\xi_{n}}{2}}\left(\frac{b(s)}{b(t)}\right)^{(n+2)(n+3)/2}\sum_{k=1}^{\frac{n+\xi_{n}}{2}}(-1)^{k}(z^{2k-\xi_{n}}-\mu_{2k-\xi_{n}})\alpha_{\frac{n+2+\xi_{n}}{2},k}[n+2]
\end{eqnarray}

where the last inequality results from \eqref{eq:sumcoeff} with $n\leftarrow n+2$; this is nothing but $I_2$. This completes the proof.
\end{proof}

\begin{corollary}[Limit law for the transition densities of \eqref{eq:unifZ2}] When $b$ is grounded and non-decreasing, the solution of the SDE \eqref{eq:unifZ2} conditional on $\tilde{Z}_s=z\in[-1,1]$, $s\geq 0$ admits a stationary law in the sense that each conditional moment of the solution tends to a constant. If, moreover, $\lim_{t\to\infty} 1/b(t)=0$ then then stationary law is ${U}(-1,1)$.
\end{corollary}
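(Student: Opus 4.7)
The plan is to read off the claim directly from the explicit conditional-moment formula given in the preceding theorem. The key observation is that the only $t$-dependence of $M_n(s,t;z)$ enters through powers of the ratio $b(s)/b(t)$, and every exponent $j(2(j-\xi_{n})+1)$ occurring in the double sum is at least $1$ (since $j\ge k\ge 1$, one has $j(2(j-\xi_{n})+1)\ge 2(1-\xi_{n})+1\ge 1$). Consequently the asymptotic behaviour of every conditional moment is dictated by $\lim_{t\to\infty} b(s)/b(t)$, and no delicate cancellation between terms needs to be tracked.

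For the stationarity assertion, I would use that $b$ is non-decreasing on $[0,\infty)$ and grounded, so $\lim_{t\to\infty} b(t)=:b_\infty\in[b(s),+\infty]$ exists in the extended real sense. With the convention $1/\infty:=0$, this gives $b(s)/b(t)\to b(s)/b_\infty\in[0,1]$, and each of the finitely many terms in the formula of the preceding theorem converges to a deterministic constant (possibly depending on $s$ and $z$). Summing, $M_n(s,t;z)$ has a finite limit as $t\to\infty$ for every $n\ge 0$, which is exactly the claimed stationary-law property at the level of moments. If in addition $1/b(t)\to 0$, equivalently $b_\infty=+\infty$, then $(b(s)/b(t))^{j(2(j-\xi_{n})+1)}\to 0$ for every $j\ge 1$, the whole double sum vanishes, and one is left with $\lim_{t\to\infty} M_n(s,t;z)=\mu_n$ for all $n\ge 0$.

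To upgrade this moment convergence to a statement about the law, I would invoke the same moment-determinacy argument already used in the paper to identify the marginal distribution (Appendix~\ref{app:moments}): the uniform law on the compact interval $[-1,1]$ is uniquely determined by its moments $(\mu_n)$, and convergence of all moments of uniformly bounded random variables on $[-1,1]$ implies weak convergence, by Stone--Weierstrass density of polynomials in $C([-1,1])$. Since $\tilde{Z}_t\in[-1,1]$ ensures uniform boundedness of all conditional moments, the conditional law of $\tilde{Z}_t$ given $\tilde{Z}_s=z$ converges weakly to $U(-1,1)$. The proof contains no serious obstacle; the only two points to verify carefully are that each exponent $j(2(j-\xi_{n})+1)$ is strictly positive on the summation range (so that no term survives in the limit when $b_\infty=\infty$) and that moment-determinacy on $[-1,1]$ may be invoked, both of which are immediate.
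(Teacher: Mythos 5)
Your proposal is correct and follows essentially the same route as the paper, which treats the corollary as an immediate consequence of the explicit conditional-moment formula: the only $t$-dependence is through powers $(b(s)/b(t))^{j(2(j-\xi_n)+1)}$ with strictly positive exponents, so the moments converge, and they converge to $\mu_n$ exactly when $1/b(t)\to 0$. Your additional step upgrading moment convergence to weak convergence to $U(-1,1)$ via moment determinacy on $[-1,1]$ is the same device the paper itself uses (Appendix \ref{app:moments}), so nothing is missing.
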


We finally confirm the intuition given the above moments result, showing that we can connect the general case to the special time-homogeneous case discussed in Section \ref{sec:ergodicspecial}. To do this, it will be enough to introduce a deterministic time change. The following proposition is essentially equivalent to the methodology in \cite{peacock} that we already used in the proof of Theorem \ref{th:existnouniq}, but given the different context we state and prove the proposition explicitly for convenience.

\begin{proposition}{[General mean reverting SDE as a time--changed time homogeneous SDE]}
Consider the general SDE \eqref{eq:unifZ2} for $\tilde{Z}_t$ with $t \ge t_0$. There exists a Brownian motion $B$ such that
\[   \tilde{Z}_t = \xi_{\tau(t)}  \ \ \ \mbox{for the deterministic time change} \ \ \tau(t) = \ln(b(t)),
\]
where $\xi$ is the solution of the following SDE driven by $B$:
\begin{equation} d \xi_t = - \xi_t\ dt + \left(1-\xi_t^2\right)^{1/2} 1_{\{\xi_t \in [-1,1]\}} dB_t ,
\end{equation}
 provided that
\[t_0 = b^{-1}(1), \ \     \xi_{\tau(t_0)} = \xi_0 = \tilde{Z}_{t_0} \]
and that the initial condition is assumed to be a random variable $\xi_{0}$ with uniform law in $[-1,1]$ and independent of $B$.
\end{proposition}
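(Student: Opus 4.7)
The plan is to apply a deterministic time change that absorbs the coefficient $\dot{b}(t)/b(t)$ into the clock. Under the standing assumptions on $b$, the function $\tau(t):=\ln b(t)$ is a $C^{1}$-diffeomorphism from $[t_0,\infty)$ onto $[0,\infty)$ with derivative $\dot{\tau}(t)=\dot{b}(t)/b(t)$, since $b$ is strictly increasing with $b(t_0)=1$. I will then define $\xi_u:=\tilde{Z}_{\tau^{-1}(u)}$ for $u\geq 0$ and show that $\xi$ solves the stated time-homogeneous Jacobi-type SDE with respect to a Brownian motion $B$ to be identified.

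To carry this out, I start from the integral form of \eqref{eq:unifZ2}. The substitution $s=\tau^{-1}(v)$ in the Lebesgue integral, using $(\tau^{-1})'(v)=b(\tau^{-1}(v))/\dot{b}(\tau^{-1}(v))$, gives
\[
-\int_{t_0}^{t} \frac{\dot{b}(s)}{b(s)}\,\tilde{Z}_{s}\,ds \;=\; -\int_{0}^{\tau(t)} \xi_{v}\,dv.
\]
For the stochastic part, let $M_t:=\int_{t_0}^{t}\sigma(s,\tilde{Z}_s)\,dW_s$ with $\sigma^{2}(s,z)=(\dot{b}(s)/b(s))(1-z^{2})$; the same change of variable applied to the quadratic variation yields $\langle M\rangle_{\tau^{-1}(u)}=\int_{0}^{u}(1-\xi_{v}^{2})\,dv$. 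By the Dambis--Dubins--Schwarz time-change theorem for continuous local martingales (see e.g.\ \cite{3m}), there exists, on a possibly enlarged probability space, a Brownian motion $B$ such that $M_{\tau^{-1}(u)}=\int_{0}^{u}\sqrt{1-\xi_{v}^{2}}\,dB_{v}$. Assembling the drift and martingale pieces delivers precisely the stated SDE for $\xi$.

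For the initial condition, the choice $t_0=b^{-1}(1)$ gives $\tau(t_0)=0$, hence $\xi_0=\tilde{Z}_{t_0}\sim U([-1,1])$, as required. The indicator $\ind_{\{\xi_u\in[-1,1]\}}$ is almost surely equal to one, since $\tilde{Z}_t=X_t/b(t)\in[-1,1]$ almost surely by Theorem~\ref{th:exist}, so including or omitting it changes nothing. The step I expect to need the most care is the invocation of DDS: strict increase of $\langle M\rangle$ holds precisely at times when $|\tilde{Z}|<1$, which by the zero-time-at-the-boundaries property of Theorem~\ref{th:attain} occurs for Lebesgue-almost every time, and surjectivity of $\tau$ onto $[0,\infty)$ under the standing assumption $\lim_{t\uparrow\infty}b(t)=\infty$ takes care of the horizon. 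Beyond that, the argument is routine bookkeeping, and the identification $\tilde{Z}_t=\xi_{\tau(t)}$ is rendered canonical by the uniqueness analysis of Section~\ref{sec:ergodicspecial} applied with $k=1$.
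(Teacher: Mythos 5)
Your argument is correct, and it rests on the same two ingredients as the paper's proof (the deterministic clock $\tau(t)=\ln b(t)$ and a Dambis--Dubins--Schwarz-type step), but you apply the time-change machinery to a different martingale, which changes the bookkeeping in a nontrivial way. The paper isolates the driving noise $M_t=\int \left(\dot{b}(s)/b(s)\right)^{1/2}dW_s$, whose quadratic variation is the \emph{deterministic} function $\tau(t)$; DDS then gives literally $M_t=B_{\tau(t)}$, the SDE is rewritten as $d\widetilde{Z}_t=-\widetilde{Z}_t\,d\tau(t)+\left(1-\widetilde{Z}_t^2\right)^{1/2}dB_{\tau(t)}$, and one reads off the claim. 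Because this $B$ is a functional of $W$ alone, its independence from the initial condition $\zeta$ is immediate, and no enlargement of the probability space is needed. You instead take $M_t=\int_{t_0}^t\sigma(s,\widetilde{Z}_s)\,dW_s$, whose quadratic variation after the change of clock is the \emph{random} process $\int_0^u\left(1-\xi_v^2\right)dv$; what you then need is not DDS per se but the representation of a continuous local martingale with absolutely continuous quadratic variation as a stochastic integral against some Brownian motion (Karatzas--Shreve, Thm.\ 3.4.2), which does require the enlargement you mention and benefits from your observation, via the zero-boundary-time result, that $1-\xi_v^2>0$ for a.e.\ $v$. That route is legitimate, and the independence of $\xi_0$ from your $B$ still holds (your $B$ is a Brownian motion for the time-changed filtration $\mathcal{G}_u=\mathcal{F}_{\tau^{-1}(u)}$, and $\xi_0\in\mathcal{G}_0$, so $B$ is independent of $\mathcal{G}_0$), but it is worth stating explicitly since the proposition asserts it. Finally, the closing appeal to the ``uniqueness analysis of Section 6.1'' is unnecessary: you defined $\xi_u:=\widetilde{Z}_{\tau^{-1}(u)}$, so the identity $\widetilde{Z}_t=\xi_{\tau(t)}$ holds by construction; all that needs proving is that this $\xi$ satisfies the stated SDE, which your computation already delivers.
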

\begin{proof}
Consider the SDE \eqref{eq:unifZ2} for $\tilde{Z}$. This is driven by the continuous martingale
\[ M_t = \int_{0}^t \left(\frac{\dot{b}(s)}{b(s)}\right)^{1/2} dW_s  \]
in that it can be written as
\[ d \widetilde{Z}_t = -\frac{\dot{b}(t)}{b(t)} \widetilde{Z}_t dt +
\left(1-\widetilde{Z}_t^2\right)^{1/2} 1_{\{Z_t \in [-1,1] \}} dM_t .\]
Note that the quadratic variation of $M$ is given by $\langle M \rangle_t = \tau(t)$.
From the Dambis, Dubins--Schwarz (DDS) theorem we know that there exists a Brownian motion $B$ such that
\[ M_t = B_{\langle M \rangle_t} = B_{\tau(t)}.\]
If we further notice that $d \tau(t) = (\dot{b}(t)/b(t)) dt$ we can write SDE \eqref{eq:unifZ2} as
\[ d \widetilde{Z}_t =  - \widetilde{Z}_t\ d \tau(t) +
\left( 1-\widetilde{Z}_t^2 \right)^{1/2} 1_{\{\widetilde{Z}_t \in [-1,1]  \}} dB_{\tau(t)} \]
so that if we set $\xi_{\tau(t)} :=\widetilde{Z}_t$ and substitute in the last SDE above we conclude.
\end{proof}

The assumption that $t_0 = b^{-1}(1)$ (we could also take a larger $t_0$) is needed to avoid negative time in the $\xi$ SDE, but this is not an issue since we are interested in the limiting behaviour of the solution for the SDE of $\tilde{Z}_t$ for large $t$.

Given our discussion in Section \ref{sec:ergodicspecial}, we know that $\xi$ is ergodic and has a uniform invariant measure as limit law. We can then confirm our earlier result on the limit law of $\tilde{Z}$: it will be a uniform law that forgets the initial condition at an earlier time, and the $\tilde{Z}$ process will be a deterministic time-change of an  ergodic process.

\begin{figure}
\centering
\subfigure[$z=-0.95,b(t)=2t^{3/2}$]{\includegraphics[width=0.48\columnwidth]{./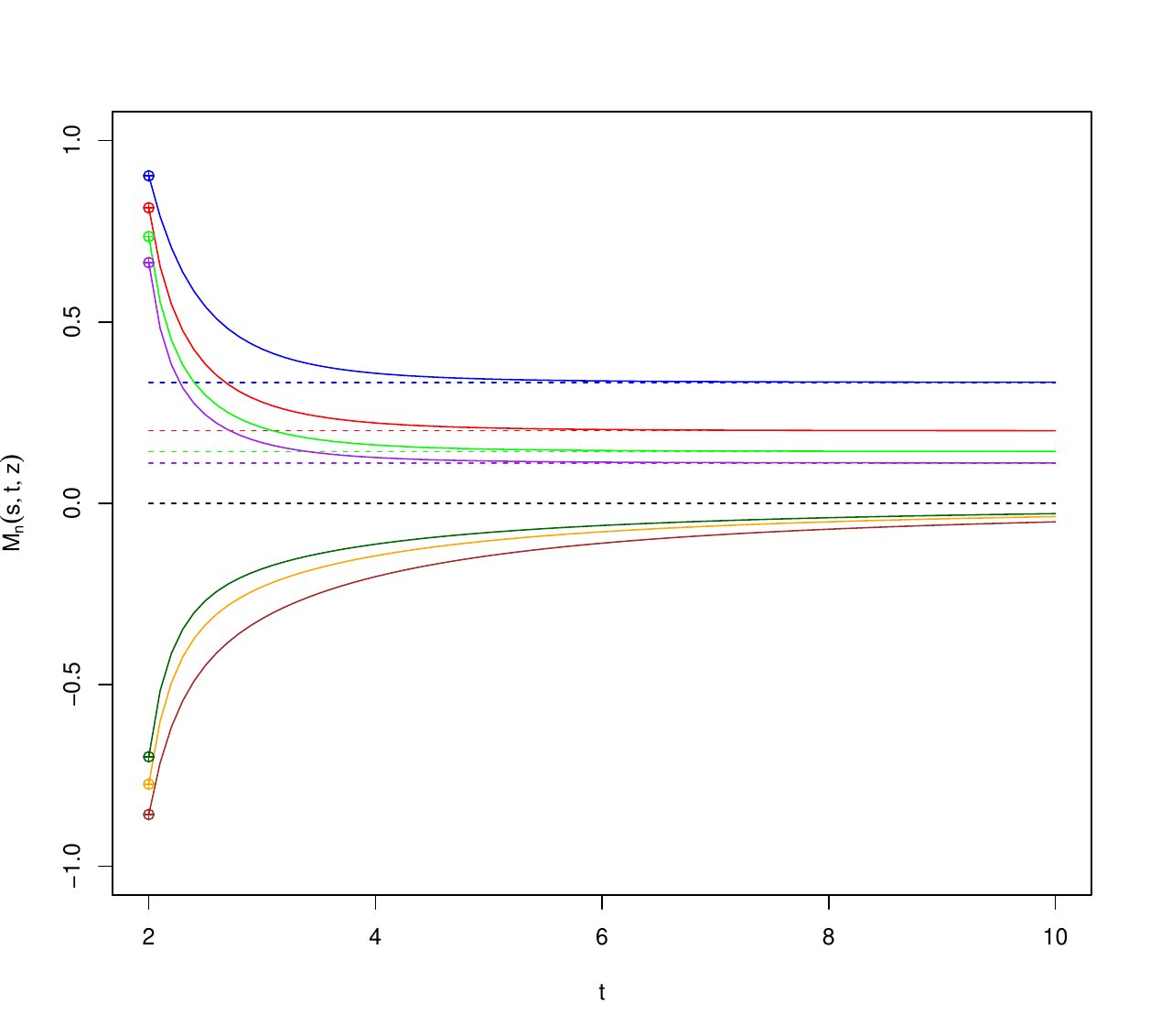}}\hspace{0.2cm}
\subfigure[$z=0.95,b(t)=2t^{3/2}$]{\includegraphics[width=0.48\columnwidth]{./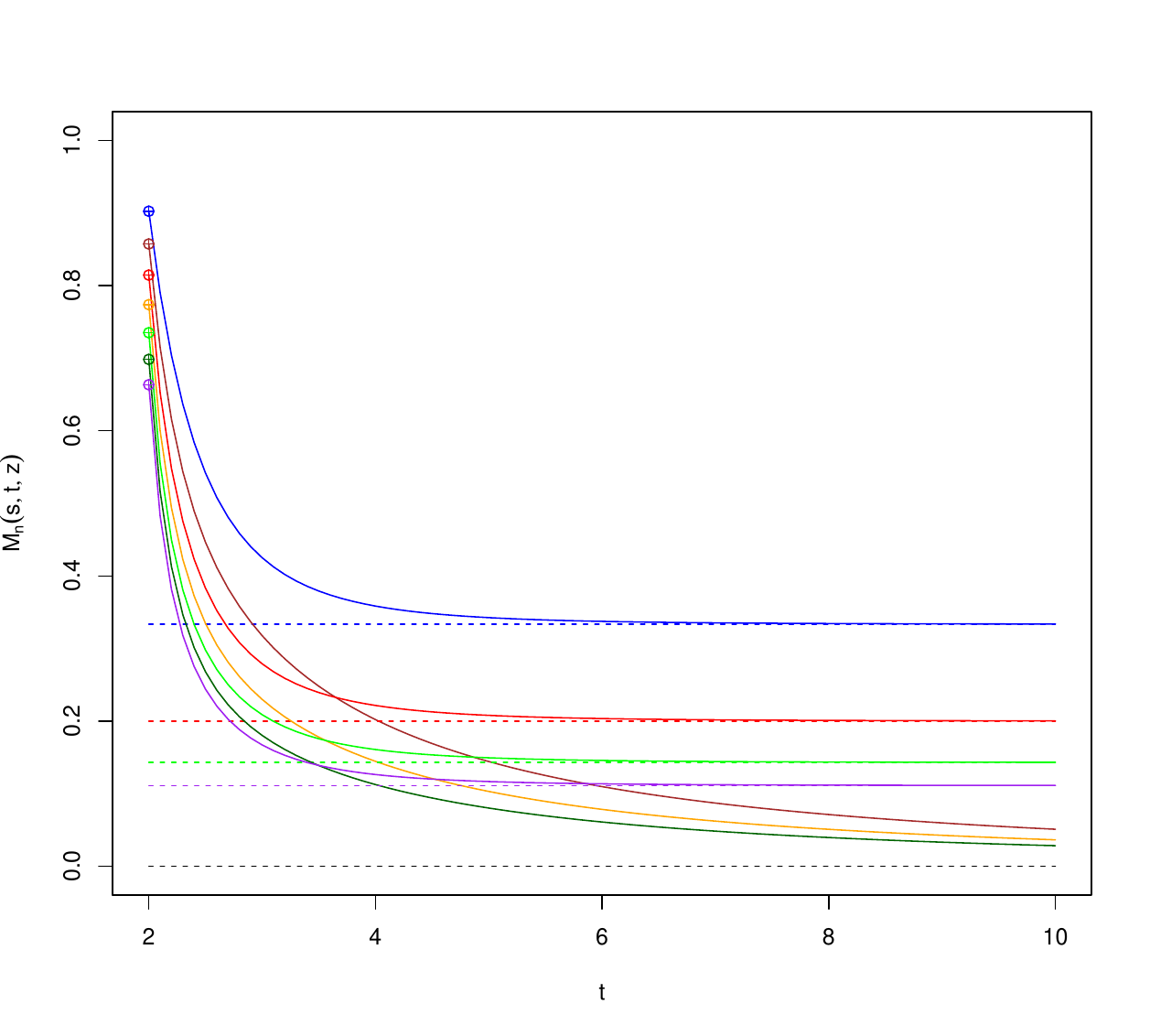}}\\
\subfigure[$z=-0.5,b(t)=2t^{3/2}$]{\includegraphics[width=0.48\columnwidth]{./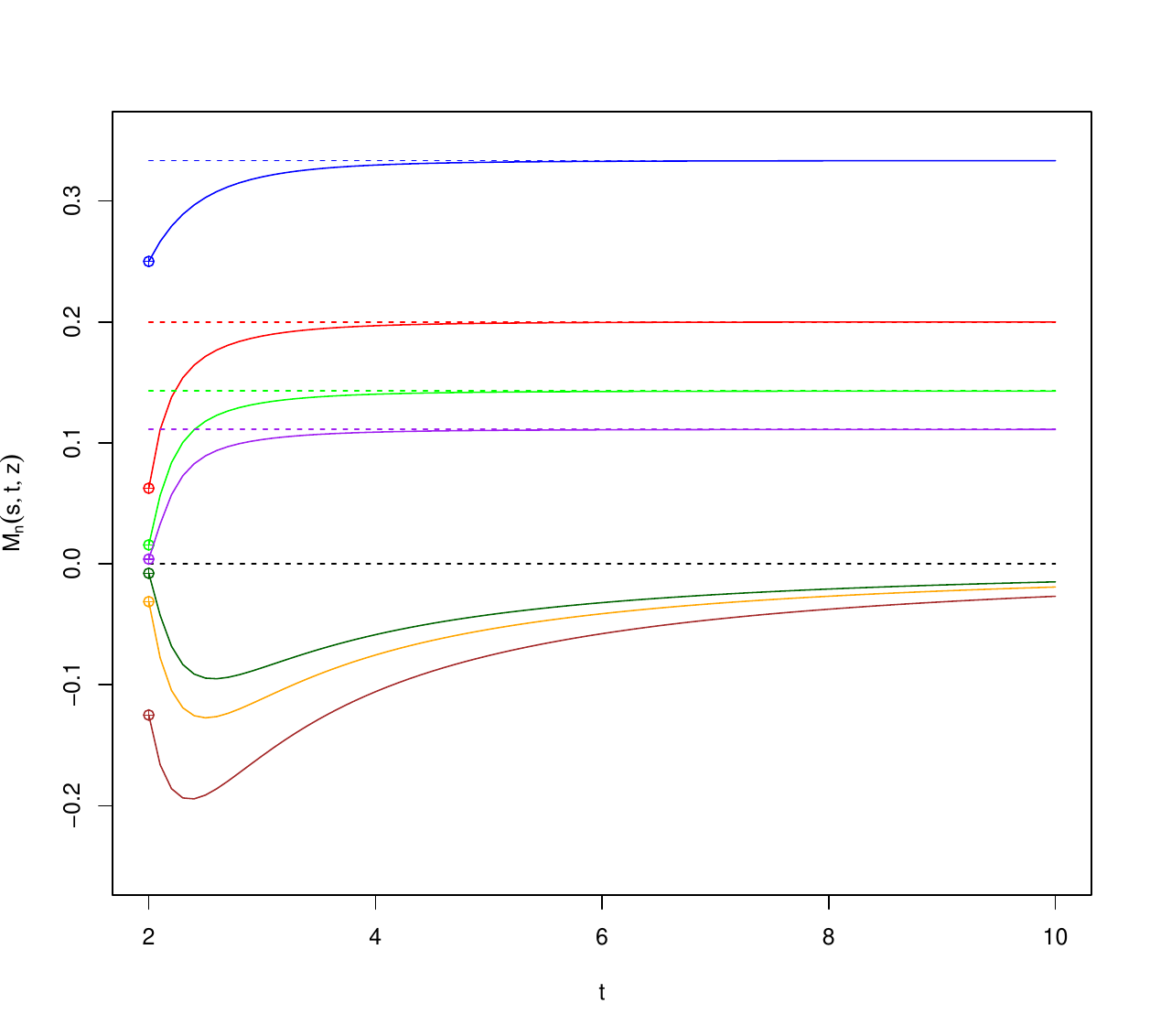}}\hspace{0.2cm}
\subfigure[$z=-0.85,b(t)=1-e^{-t/3}$]{\includegraphics[width=0.48\columnwidth]{./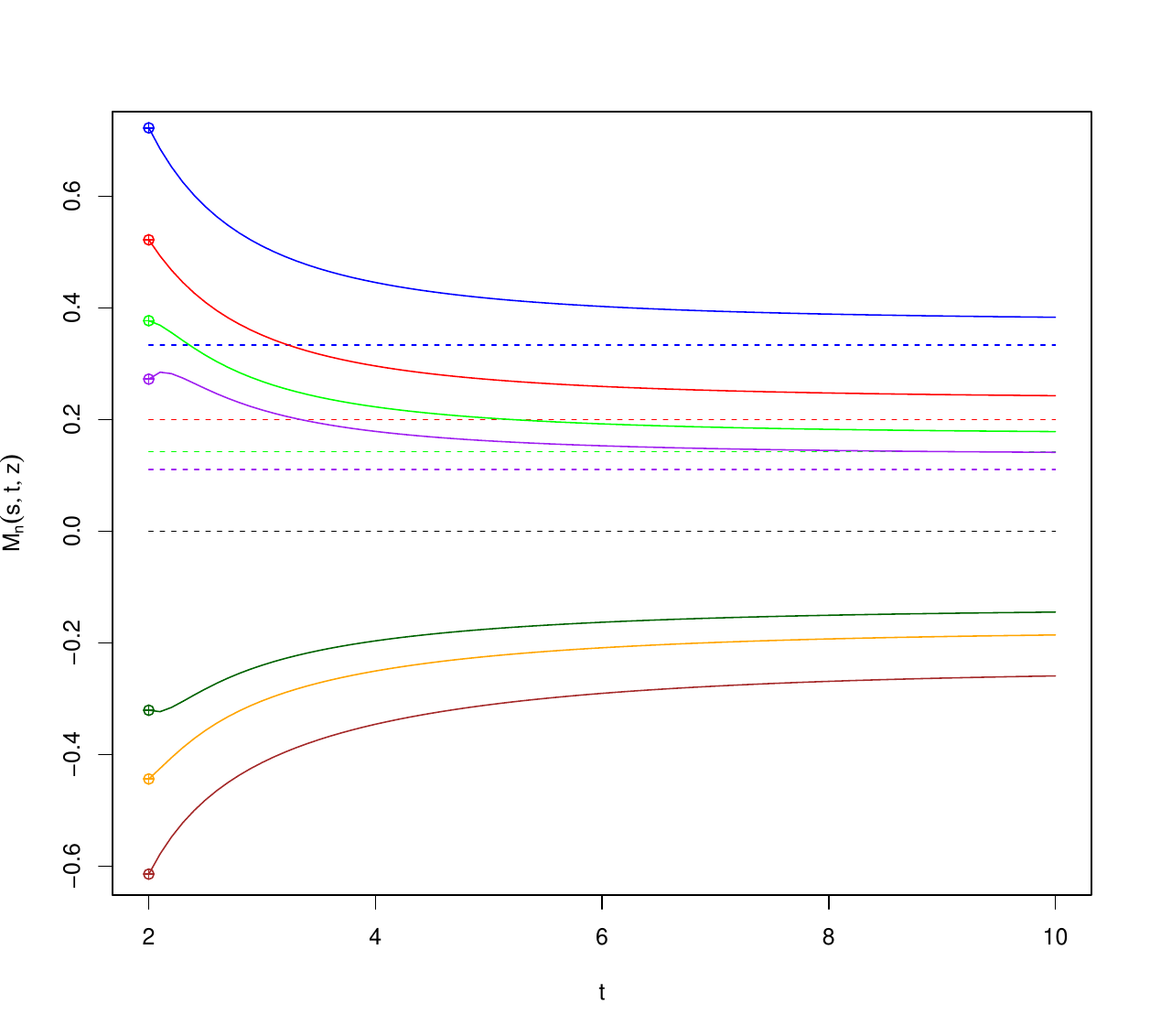}}
\caption{Evolution of the $2,\ldots,8$-th conditional moments for $s=2$ up to $T=10$ in the following order: blue, brown, red, orange, green, dark green, purple.}\label{fig:hist1}
\end{figure}

\section{Local time at the boundaries and potential applications}\label{sec:localtime}

We now discuss the behaviour of the solution $\widetilde{Z}$ of \eqref{eq:unifZ} at the boundaries -1 and 1, and thus the behaviour of the original $X_t$, solution of \eqref{eq:martinX2}, at the boundaries $-b(t)$ and $b(t)$.

\begin{theorem}{[Local time calculation.]}\label{th:attain}

Given a strictly increasing function $b$ defined in $[0,T]$, continuous, and differentiable in $(0,T]$, assume $b(0)=0$ and $\dot{b}\ b$ to be bounded in $(0,T]$, with finite limit $\lim_{t \downarrow 0} \dot{b}(t)b(t)$ (this holds under the assumptions of Theorem \ref{th:exist} and is a slight reinforcement of the assumptions of Theorem \ref{th:existnouniq}). The local time for the  process $b(t)-X_t$  (resp. $X_t+b(t)$ ) at level $0$  is zero.
\end{theorem}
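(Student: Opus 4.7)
The plan is to apply Tanaka's formula to the nonnegative continuous semimartingale $Y_t:=b(t)-X_t$ and exploit the fact that the diffusion coefficient of $X$ vanishes exactly on the zero set of $Y$. The argument for $X_t+b(t)$ will be fully symmetric, so I only describe the upper--barrier case.

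First I would write the It\^o decomposition
\[
dY_t=\dot b(t)\,dt-\sigma(X_t,t)\,dW_t,
\]
noting that $Y_t\in[0,2b(t)]$ a.s.\ by the cone confinement established in Theorems~\ref{th:exist} and~\ref{th:existnouniq}. Applying Tanaka's formula to $(Y_t-0)^+=Y_t$ and subtracting the identity $Y_t-Y_0=\int_0^t dY_s$ yields the clean representation
\[
\tfrac12 L_t^0(Y)=\int_0^t \ind_{\{Y_s=0\}}\dot b(s)\,ds-\int_0^t \ind_{\{Y_s=0\}}\sigma(X_s,s)\,dW_s.
\]
The stochastic integral is identically zero, because $\sigma(X_s,s)^2=(\dot b(s)/b(s))(b(s)-X_s)(b(s)+X_s)$ vanishes on $\{Y_s=0\}=\{X_s=b(s)\}$, so the integrand is the null process.

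It then remains to kill the drift term, and this is where the uniform marginal law already proved plays the decisive role. By Fubini-Tonelli,
\[
\E\!\left[\int_0^t \ind_{\{X_s=b(s)\}}\,ds\right]=\int_0^t \Pb(X_s=b(s))\,ds=0,
\]
since $X_s\sim U(-b(s),b(s))$ is atomless at $b(s)$ for every $s>0$. Hence $\{s\in[0,t]:X_s=b(s)\}$ is Lebesgue-null almost surely. Because $b$ is monotone with $b(0)=0$, $\dot b$ is Lebesgue-integrable on $[0,T]$ (indeed $\int_0^T \dot b(s)\,ds=b(T)<\infty$), so the drift integral collapses as well and $L_t^0(Y)=0$ almost surely.

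The only genuinely nontrivial ingredient is the Fubini step, which needs the continuous (uniform) marginal density previously established; the rest is a standard Tanaka-type decomposition made tight by the fact that the diffusion coefficient and the barrier share the same zero set. The local time at $0$ of $X_t+b(t)$ is handled identically, the sign flip in the drift being absorbed the same way by $\sigma(X_s,s)$ vanishing at the lower barrier $X_s=-b(s)$.
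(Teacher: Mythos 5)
Your proof is correct, but it follows a genuinely different route from the paper's. The paper works with $U_t=b(t)-X_t$ and never uses the marginal law: it bounds the occupation time $\int_0^t \ind_{\{0\le U_s\le 2b(s)\}}\,ds\le t$, rewrites it via a space--time extension of the occupation time formula (Ghomrasni--Peskir) as an integral of $d_sL^a_s$ against the kernel $\frac{b(s)}{\dot b(s)}\frac{1}{a(2b(s)-a)}$, uses the boundedness of $\dot b\,b$ to get $b(s)/\dot b(s)\ge C$ on $[\delta,T]$, and then concludes $L^a_t-L^a_\delta=0$ from the non-integrability of $1/a$ at $a=0$ together with continuity of local time in the space variable, finally sending $\delta\downarrow 0$. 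You instead apply Tanaka's formula to the nonnegative semimartingale $Y=b-X$, kill the martingale part because the integrand $\ind_{\{Y_s=0\}}\sigma(X_s,s)$ is identically zero, and kill the drift part $\int_0^t\ind_{\{X_s=b(s)\}}\dot b(s)\,ds$ by Fubini and the atomlessness of the uniform marginals (with $\dot b\ge 0$, and $\dot b\in L^1(0,T)$ following from monotonicity and continuity of $b$ at $0$, which is also what justifies writing $dY_t=\dot b(t)\,dt-\sigma(X_t,t)\,dW_t$). Your argument is shorter and avoids both the extended occupation-time formula and the lower bound on $b/\dot b$, but it imports the uniform marginal law as an input; this is legitimate and non-circular in the paper's logical order (the uniform law is obtained in Theorem \ref{th:existnouniq} by construction, and in Appendix \ref{app:moments} using only confinement in the closed cone, not the local-time result), though you should state explicitly that you are invoking it. The paper's proof, by contrast, is purely pathwise and quadratic-variation based, so it is independent of the distributional results and would apply before the law of $X_t$ is identified.
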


\begin{proof}
Let us introduce $U_t= b(t)-X_t$.  Then $$d \langle U\rangle _t= \ind_{\{0\leq U_t \leq 2b(t)\}} \left(\frac{\dot b(t)}{b(t)}\right) U_t (2b(t)-U_t)dt$$ Then
\begin{eqnarray} t  &\geq &  \int_0^t   \ind_{\{0\leq U_s \leq 2b(s)\}} ds= \int_0^t  \ind_{\{0\leq U_s \leq 2b(s)\}} \frac{  b(s)}{ \dot b(s)} \frac{1}{(2b(s)-U_s) U_s}d \langle U\rangle _s \nonumber\\
&=&   \int_0^\infty  da  \int_0^t    \ind_{\{0\leq a \leq 2b(s)} \frac{  b(s)}{ \dot b(s)} \frac{1}{(2b(s)-a) a}   d_sL^a_s     \nonumber \end{eqnarray}
where the last equality comes from an extension of the occupation time formula
(\cite{ry:cm}, Chapter VI, Section 1, Corollary 1.6) as in \cite{peskir}.
%

We note that $b(s)/\dot{b}(s)$ is bounded from below by a positive constant $C$ for all $s \ge \delta $. We can easily see that this is indeed the case since $\dot{b}(s) {b}(s)$ is bounded by above in $[0,T]$ by assumption, say by a constant $K>0$, so that $\dot{b}(s)/b(s) = \dot{b}(s) b(s) / b(s)^2 \le K / b(\delta)^2 =: C$. This implies that
$b(s)/\dot{b}(s) \ge C$ for all $t \ge \delta$.
\\
We obtain  $$t  \geq  C \int_0^\infty  da \int_\delta^t  \ind_{\{0\leq a \leq 2b(\delta)\}}     \frac{dL^a_s}{(2b(\delta)-a)a}   \geq  C \int_0^{2b(\delta)} \frac{L^a_t-L^a_\delta}{(2b(\delta)-a)a}da $$ which implies that $L^0_t-L^0_\delta=0$. By continuity, $L^0_\delta$ goes to $0$ when $\delta$ goes to $0$.
\end{proof}

We conclude this section with a hint at potential applications of our processes and with two remarks.
$\bar{Y}$ can be used for example to model the dynamics of recovery rates  or probabilities in the case of no information (maximum entropy), whereas $\widetilde{Z}$ can be used as a model for stochastic correlation.

\begin{remark} The above construction for $\bar{Y}$ and $\widetilde{Z}$, mean-reverting uniform diffusions with fixed boundaries based on rescaling the process $X$ of  Theorem \ref{th:exist}, has the drawback of starting time at ${t_0}>0$, without defining the dynamics in $[0,{t_0})$. This is done to  avoid singularities in $t=0$ with the rescaling.  On the other hand, it has the advantage that the solution is unique in the strong sense.
An alternative for obtaining a similar process, especially for cases like $b(t) = \sqrt{t}$, is to start from $X$ constructed as in Theorem \ref{th:existnouniq}, requiring assumptions on $b$ that are weaker than  in  Theorem \ref{th:exist}. If we do so, and recalling  $Y$ in the proof of Theorem \ref{th:existnouniq} and Eq. \eqref{eq:XYgamma} in particular, we obviously could have $Z_t=Y_{\gamma(t)}$ where $\gamma(t)=2\ln b(t)$, or even $Z_t = Y_t$. Notice however that to get a diffusion with uniform law in $[-1,1]$ we could directly define a process $\hat{Z}$ as $\hat{Z}_t:=Y_{\alpha(t)}$ for any time change function $\alpha$ provided that it is increasing. Indeed, this would not affect the marginals of $\hat{Z}$ as $Y$ is a diffusion with uniform marginals in $[-1,1]$ at all times.
\end{remark}

\begin{remark} The above rescaling approach yields a diffusion associated to the uniform peacock with constant boundaries $-1,1$. It is also obvious from \eqref{ygamma} that defining $Z$ as $Z_t=Y_{\gamma(t)}$ will lead to a mean-reverting diffusion. However, this is a mean-reverting diffusion process and not a diffusion martingale. Still, we know since~\cite{Kell72} that there is a martingale associated to any peacock. Hence a natural question is: what is the diffusion martingale associated with this peacock ? Looking at the forward Kolmogorov equation, the answer turns out to be: only the trivial martingale diffusions with zero drift and zero diffusion coefficients. Indeed,  forcing $\varphi(x,t)$ to be the density of a uniform with fixed boundaries at all time implies that the left hand side of \eqref{eq:fokkerrho3} vanishes, leading to $\sigma(t,x)=0$ for all $x$. In other words, the only diffusion martingale associated to this peacock is the trivial martingale $Z_t=\zeta$  for all $t$, where $\zeta\sim U([-1,1])$.
\end{remark}

Finally, with a slight abuse of notation, we will denote $\bar{Y}$ by $Y$ and $\widetilde{Z}$ by $Z$ in the rest of the paper.

\section{Specific choices of the boundary $b(t)$ and links with peacocks}\label{sec:specific}
In this section we present a number of qualitatively different choices for $b(t)$.

\subsection{The square-root case $b(t)=\sqrt{t}$}

As we pointed out earlier, the case $b(t)=\sqrt t$ for \eqref{eq:martinX2}, which leads to  \[ d X_t=  \frac{1}{\sqrt 2}\sqrt{1-\frac{X_t^2}{t}}  \ind_{\{X_t \in [-\sqrt{t},\sqrt{t}]\}}dW_t, \ \ X_0=0\] corresponds exactly to the solution presented in \cite{peacock}.

\subsection{The linear case $b(t)= k t$: numerical examples and activity}
The case $b(t)=k t$ fits the assumption of Theorem \ref{th:exist} since  $\dot{b}(t)=k$ is bounded on $[0,T]$ for any $T\in\mathbb{R}^+$. Notice also that $\dot{b}(t) b(t) = k^2 t$ vanishes for $t \downarrow 0$. Our previous SDEs for $X$ \eqref{eq:martinX2} and $\widetilde{Z}$ \eqref{eq:unifZ} specialize to
\begin{equation}\label{eq:martinX2kt} d X_t = \ind_{\{X_t \in [-k t, k t]\}} \frac{1}{\sqrt{t}} \ \sqrt{ (kt)^2 - X_t^2} \ dW_t, \ \ X_0=0, \ \ X_t \sim U([-kt, kt]) \mbox{\ \ for all \ \  } t>0
\end{equation}
and
\begin{equation}\label{eq:Zkt} d Z_t = - \frac{1}{t}\ Z_t\ dt + \ind_{\{Z_t\in [-1,1]\}}\ \frac{1}{\sqrt{t}}\ \sqrt{1-Z_t^2} \  dW_t, \ \ Z_{t_0} = \zeta \sim   U([-1,1]) \mbox{\ \ for all \ \  } t\ge {t_0} .
\end{equation}
As a numerical example we implement the Euler scheme for $X$. We know from \cite{gyongy} that under our assumptions the Euler scheme converges in probability.
We thus implement a Euler scheme for the SDE for $X$ and then plot a histogram of the density. This is shown in Figure \ref{fig:hist1}.
Moreover, we show in the right panel of Figure \ref{fig:paths1} a few sample paths of the process $X$.

\begin{figure}
\centering
\includegraphics[width=0.45\columnwidth]{./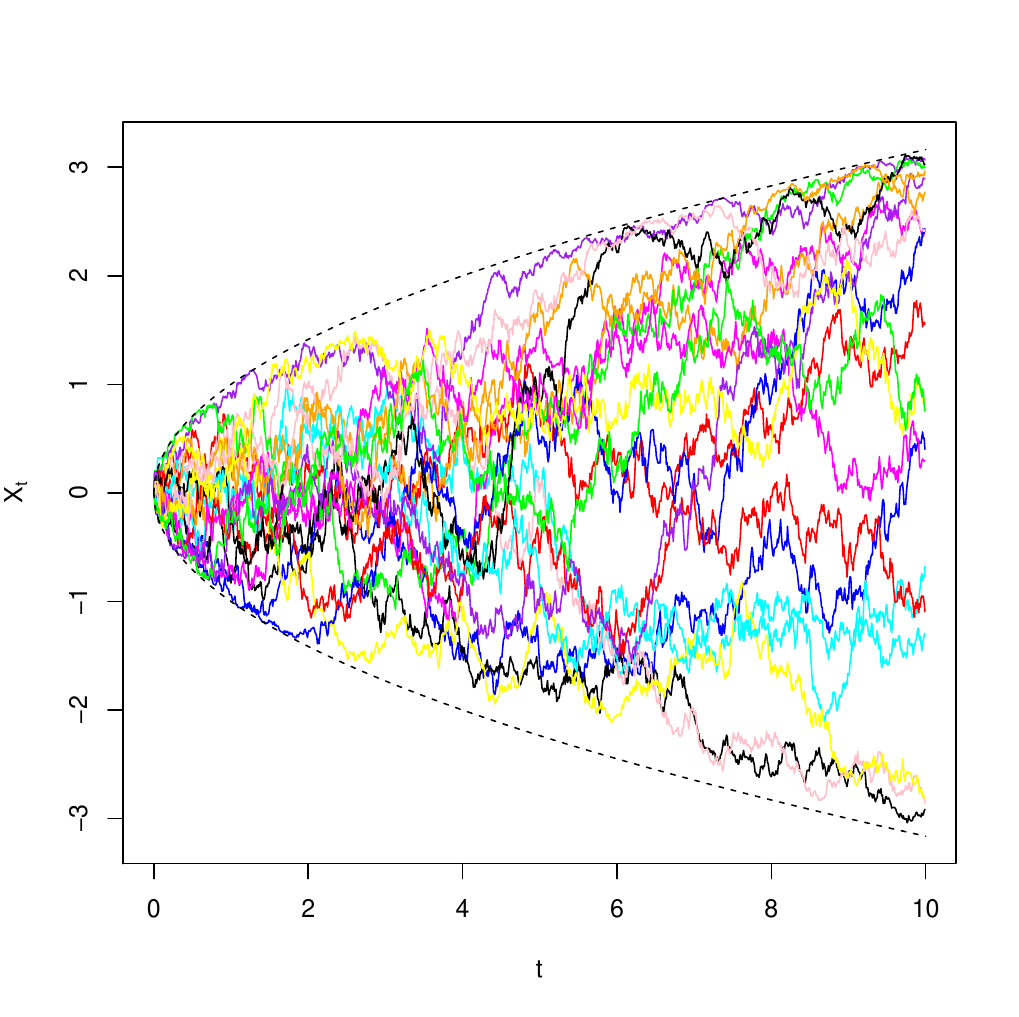}\hspace{0.2cm}
\includegraphics[width=0.45\columnwidth]{./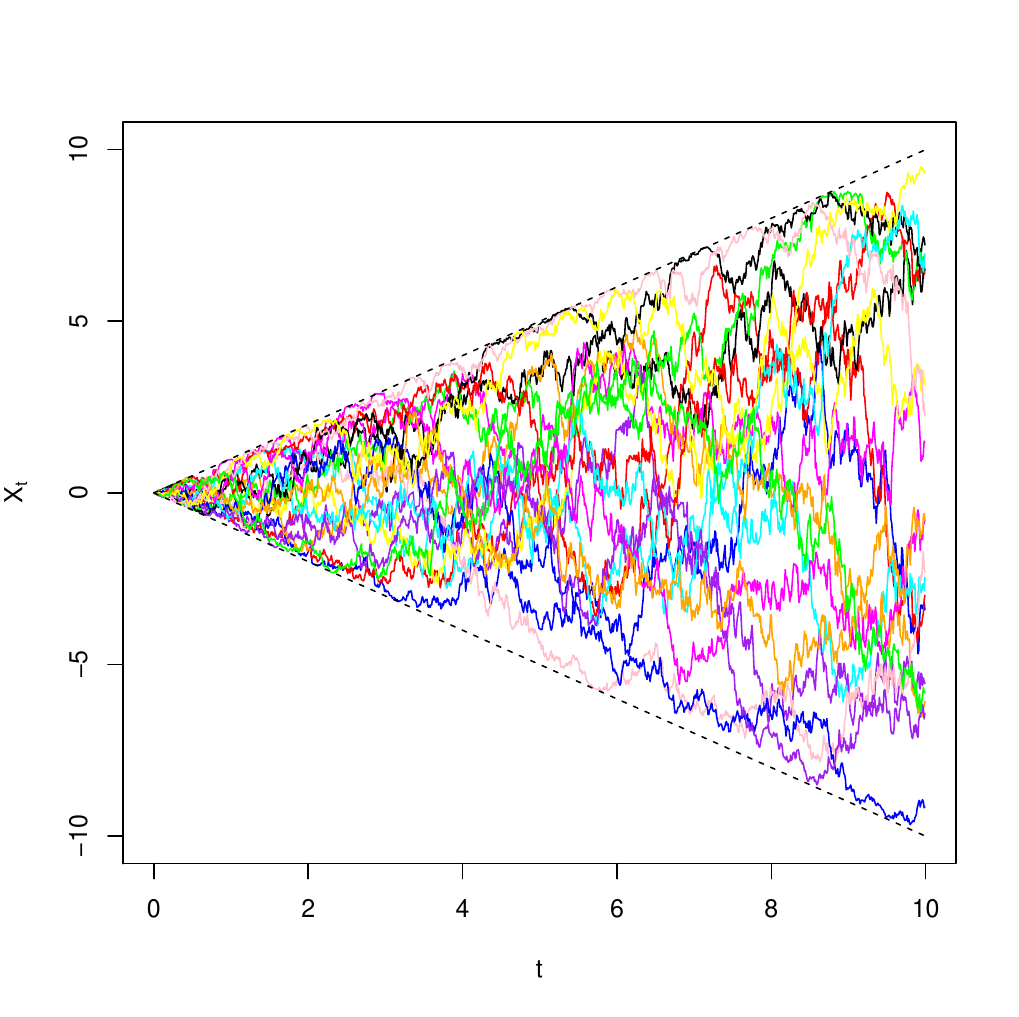}
\caption{20 paths of the SDE \eqref{eq:martinX2kt} at time $1y$ with $b(t)=\sqrt{t}$ (left) and $b(t)= t$ (right). Time step is 0.01 years. Euler Scheme.}\label{fig:paths1}
\end{figure}
\begin{figure}
\centering
\includegraphics[width=0.45\columnwidth]{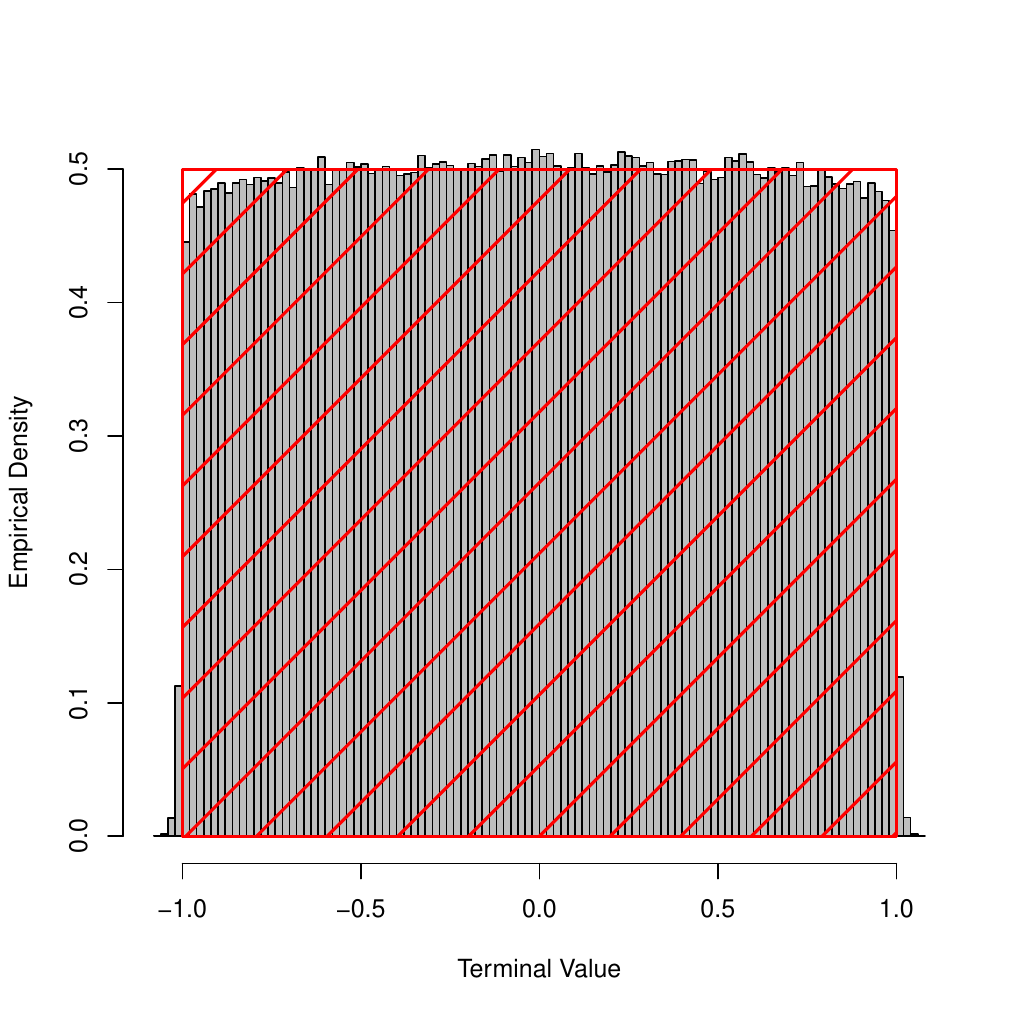}\hspace{0.2cm}
\includegraphics[width=0.45\columnwidth]{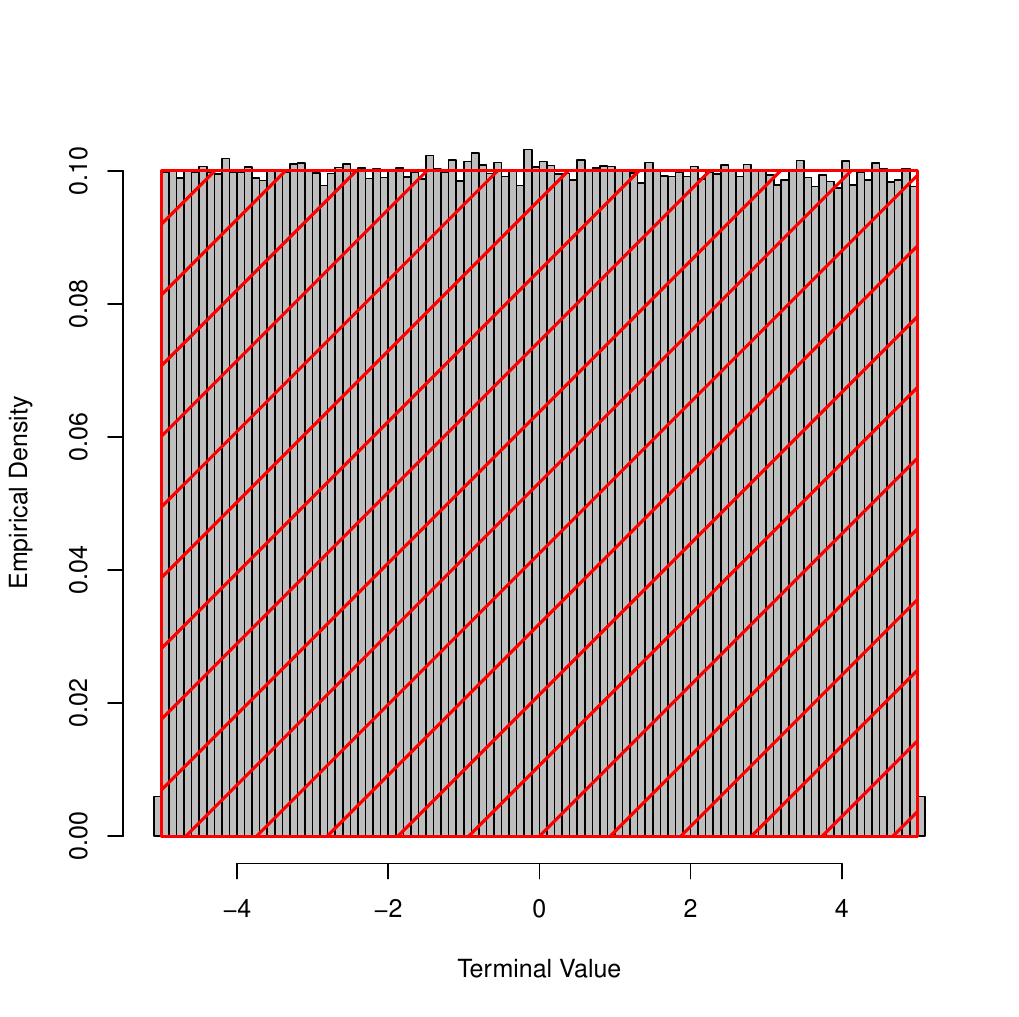}
\caption{Histograms with 100 bins for the density of the SDE \eqref{eq:martinX2kt} at time $t$ with $k=1$ for 1 Million scenarios via Euler scheme: left $t=1y$ and right $5y$. Time step is 0.01 years.}\label{fig:hist1}
\end{figure}

We may also apply Theorem \ref{th:attain} to this particular case, to see that $Z_t = X_t/(k t)$ for $t \ge t_0 >0$ spends zero time at the boundaries $-1$ and $1$. As a consequence, $X_t$ spends zero time at the boundaries $-k t$ and $k t$.

More qualitatively, we observe that $Z$ in \eqref{eq:Zkt} mean-reverts to $0$ with speed $1/t$. The speed will be very large for small time but will become almost zero when time is large. The diffusion coefficient, similarly, is divided by $\sqrt{t}$, so it will tend to vanish for large $t$.  This is confirmed by the following activity calculation. We may conclude that the process will not be absorbed in the boundary and will tend to ``slow down'' in time, while maintaining a uniform distribution.

We show that the pathwise activity of the uniform $(-1,1)$ process $Z$ is vanishing for large $t$ in the sense that the deviation of $Z_{t+\delta}(\omega)$ from $Z_{t}(\omega)$ collapses to zero for all $\delta>0$, all $\omega\in\Omega$ (the sample space) as $t\to\infty$.

\begin{lemma}

\beq
\forall \delta>0\;,~\var(Z_{t+\delta}-Z_t)\to 0~\mathrm{ as }~t\to\infty\;.\nonumber
\eeq
\end{lemma}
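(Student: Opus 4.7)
The plan is to exploit the fact that both $Z_t$ and $Z_{t+\delta}$ have the same uniform law on $[-1,1]$, so that the variance of their difference reduces entirely to a covariance computation which can be handled via the conditional first moment already derived.

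First I would record the marginal information: since $Z_t \sim U([-1,1])$ for every $t \ge t_0$, we have $\E[Z_t]=\E[Z_{t+\delta}]=0$ and $\var(Z_t)=\var(Z_{t+\delta})=1/3$. Hence
\[
\var(Z_{t+\delta}-Z_t) \;=\; \tfrac{2}{3} \;-\; 2\,\E[Z_t Z_{t+\delta}].
\]
The problem therefore reduces to computing $\E[Z_t Z_{t+\delta}]$ and showing it tends to $1/3$.

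Next I would invoke the first conditional moment formula $M_1(s,t;z)=z\,b(s)/b(t)$ established in the proof of the transition-law theorem (the $n=1$ case of the ODE $\dot M_1 = -h(t)M_1$). Specialized to $b(t)=kt$, this gives $\E[Z_{t+\delta}\mid Z_t] = Z_t\cdot t/(t+\delta)$. Conditioning and using $\E[Z_t^2]=1/3$ yields
\[
\E[Z_t Z_{t+\delta}] \;=\; \E\!\left[Z_t\,\E[Z_{t+\delta}\mid Z_t]\right] \;=\; \frac{t}{t+\delta}\,\E[Z_t^2] \;=\; \frac{1}{3}\cdot\frac{t}{t+\delta}.
\]
Substituting back,
\[
\var(Z_{t+\delta}-Z_t) \;=\; \frac{2}{3}-\frac{2}{3}\cdot\frac{t}{t+\delta} \;=\; \frac{2}{3}\cdot\frac{\delta}{t+\delta},
\]
which vanishes as $t\to\infty$ for each fixed $\delta>0$.

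There is no real obstacle here: the stationarity of the marginal law collapses the problem to a covariance, and the explicit linear ODE for $M_1$ provides the covariance in closed form. The only subtle point worth mentioning is that the argument relies on $Z_t$ being genuinely $U([-1,1])$ at every $t$ (so that $\E[Z_t^2]=1/3$ exactly), which is exactly what the construction of $Z$ via rescaling of $X$ guarantees. An entirely analogous argument works for any boundary $b$ satisfying $b(t)/b(t+\delta)\to 1$, so the lemma extends naturally to that broader setting.
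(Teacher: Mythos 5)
Your proposal is correct and follows essentially the same route as the paper: both reduce the problem to $\var(Z_{t+\delta}-Z_t)=2\left(\var(Z_t)-\E[Z_t Z_{t+\delta}]\right)$ and show the covariance equals $\var(Z_t)\,t/(t+\delta)$ --- the paper by deriving the ODE $\partial_s \E[Z_tZ_s]=-\E[Z_tZ_s]/s$ directly from the SDE via Fubini and the vanishing expectation of the stochastic integral, you by combining the tower property with the already-established conditional mean $M_1(s,t;z)=z\,b(s)/b(t)$, which rests on the same first-moment ODE. As a side remark, your value $\var(Z_t)=1/3$ is the correct one (the paper's $v=\sqrt{2}/12$ is a typo, harmless there since only the common value of the variance enters the argument).
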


\begin{proof}
Notice that for all $t>0$, $\E(Z_t)=0$ so that $v:=\var(Z_t)=\E(Z^2_t)=1/3$ is the variance of a zero-mean uniform random variable distributed on $[-1,1]$. Then,

\beq
\var(Z_{t+\delta}-Z_{t})=\var(Z_{t+\delta}^2)+\var(Z_{t}^2)-2\cov(Z_t,Z_{t+\delta})=2\left(v-\E(Z_tZ_{t+\delta})\right)\;.\nonumber
\eeq

Since $Z$ is bounded, one can rely on Fubini's theorem for all $t>0$ and exchange time-integration and expectation,

\beqn
\E(Z_tZ_{t+\delta})&=&\E\left(Z_t\left(Z_{t}-\int_{t}^{t+\delta}\frac{Z_s}{s}ds+\int_{t}^{t+\delta}\sigma(s,Z_s)dW_s\right)\right)\nonumber\\
&=&\E(Z^2_t)-\E\left(Z_{t}\int_{t}^{t+\delta}\frac{Z_s}{s}ds\right)+\E\left(\int_{t}^{t+\delta}Z_t\sigma(s,Z_s)dW_s\right)\nonumber\\
&=&v-\int_{t}^{t+\delta}\frac{\E\left(Z_{t}Z_s\right)}{s}ds\; \nonumber
\eeqn
(where we have used the fact that $\frac{1}{\sqrt{s}}\sqrt{1-Z_s^2}$ is bounded).

Hence, $\var(Z_{t+\delta}-Z_{t})=2\left(v-f(t,t+\delta)\right)$ where $f(t,s):=\E(Z_tZ_s)$ solves the ODE
\beq
\frac{\partial f(t,s)}{\partial s}=-\frac{f(t,s)}{s}\;.\nonumber
\eeq
Using the initial condition $f(t,t)=v$, the solution is $f(t,s)=vt/s$. Finally, $\lim_{t\to\infty} f(t,t+\delta)=\lim_{t\to\infty} vt/(t+\delta)=v$ showing that $\lim_{t\to 0}\var(Z_{t+\delta}-Z_{t})=0$.

%
%
%

\end{proof}

The activity result can be generalized to the following lemma.

\begin{lemma}
Let $X_t=x_0+\int_0^t \theta_s dW_s$ and suppose $X=(X_t)_{t\geq 0}$ is a bounded non-vanishing martingale in the sense that for all $t\geq 0$, $a\leq X_t\leq b$ and $\mathbb{P}(\theta_t=0)<1$. Then, the path activity of $X$ is collapsing to zero as time passes.

\end{lemma}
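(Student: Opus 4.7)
By analogy with the previous lemma, I read the statement as: for every fixed $\delta>0$, $\var(X_{t+\delta}-X_t)\to 0$ as $t\to\infty$. The idea is to exploit the fact that a uniformly bounded continuous martingale is bounded in $L^{2}$ and converges in $L^{2}$, so its quadratic variation is integrable and has vanishing tails at infinity.

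First I would observe that $|X_t|\le \max(|a|,|b|)$ uniformly in $t$, so $X$ is a bounded continuous martingale, hence a uniformly integrable $L^{2}$-martingale. By the $L^{2}$ martingale convergence theorem there exists $X_\infty\in L^{2}(\Omega)$ with $X_t\to X_\infty$ both almost surely and in $L^{2}$; in particular $t\mapsto \E[X_t^{2}]$ is non-decreasing, bounded, and convergent as $t\to\infty$.

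Next I would use orthogonality of martingale increments, or equivalently the It\^o isometry applied to $X_{t+\delta}-X_t=\int_t^{t+\delta}\theta_s\,dW_s$, to write
\[
\E\bigl[(X_{t+\delta}-X_t)^{2}\bigr] \;=\; \E[X_{t+\delta}^{2}]-\E[X_t^{2}] \;=\; \E\!\left[\int_t^{t+\delta}\theta_s^{2}\,ds\right].
\]
Since $\E[X_t^{2}]$ has a finite limit as $t\to\infty$, the middle expression tends to $0$. The martingale property also gives $\E[X_{t+\delta}-X_t]=0$, so $\var(X_{t+\delta}-X_t)=\E[(X_{t+\delta}-X_t)^{2}]\to 0$, which is the desired vanishing of the path activity.

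There is essentially no serious obstacle here: the proof is a short consequence of the bounded martingale convergence theorem combined with orthogonality of martingale increments. The hypothesis $\Pb(\theta_t=0)<1$ is not used to obtain the limit; it only serves to rule out the degenerate constant case (in which the conclusion would be vacuous) and thus ensures the lemma has non-trivial content. This is considerably simpler than the previous, SDE-specific computation, which had to identify and solve an ODE for the covariance $f(t,s)=\E[Z_tZ_s]$; the generic bounded-martingale argument bypasses that calculation entirely.
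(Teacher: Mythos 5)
Your proposal is correct and follows essentially the same route as the paper: both arguments rest on the orthogonality of martingale increments, giving $\var(X_{t+\delta}-X_t)=\E[X_{t+\delta}^2]-\E[X_t^2]$, together with the observation that this monotone quantity is bounded (you via boundedness of the martingale and $L^2$ convergence, the paper via an explicit Bernoulli-type bound $\var(X_t)\le (x_0-a)(b-x_0)$), hence convergent, so its increments vanish. Your remark that the hypothesis $\Pb(\theta_t=0)<1$ is not needed for the limit itself is also consistent with the paper, where it only serves to make the variance strictly increasing.
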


\begin{proof}
Since martingales have uncorrelated increments, the variance of increments is the increment of the variances:
\beq
\var(X_{t+\delta}-X_{t})=\var(X_{t+\delta})-\var(X_{t})\nonumber\;.
\eeq
Because the diffusion coefficient $\theta_s$ does not vanish on $(t,t+\delta)$,\\  $\var(X_{t+\delta}-X_{t})=\int_t^{t+\delta}\E(\theta_s^2)ds>0$ showing that the variance of $X_t$ is monotonically increasing with respect to  $t$. But the variance of a bounded process is bounded. In particular, it is easy to see that $\var(X_t)\leq (x_0-a)(b-x_0)$ since $\E(X_t)=x_0$ and the variance of any random variable $Y$ with expectation $\mu_Y$ and taking values in $[a,b]$ is bounded from above by the variance of $a+(b-a)B$ where $B$ is a Bernoulli random variable with parameter $\pi=(\mu_Y-a)/(b-a)$. Hence, $\var(X_t)$ and $\var(X_{t+\delta})$ are increasing to the same limit, proving that for all $\epsilon>0$ there exists $t^\star$ such that $\var(X_{t+\delta}-X_t)<\epsilon$ for all $t>t^\star$.
\end{proof}

We now illustrate the limiting distribution results with a numerical simulation. We simulate the same process as before but conditional on an initial condition at a given time. In particular, we plot in Figure \ref{fig:tran1} the histograms of the transition densities $p_{X_{100y}|X_{90y}}(\cdot;0)$ and  $p_{X_{100y}|X_{90y}}(\cdot;0)$.

\begin{figure}[h!]
\centering
\includegraphics[width=0.45\columnwidth]{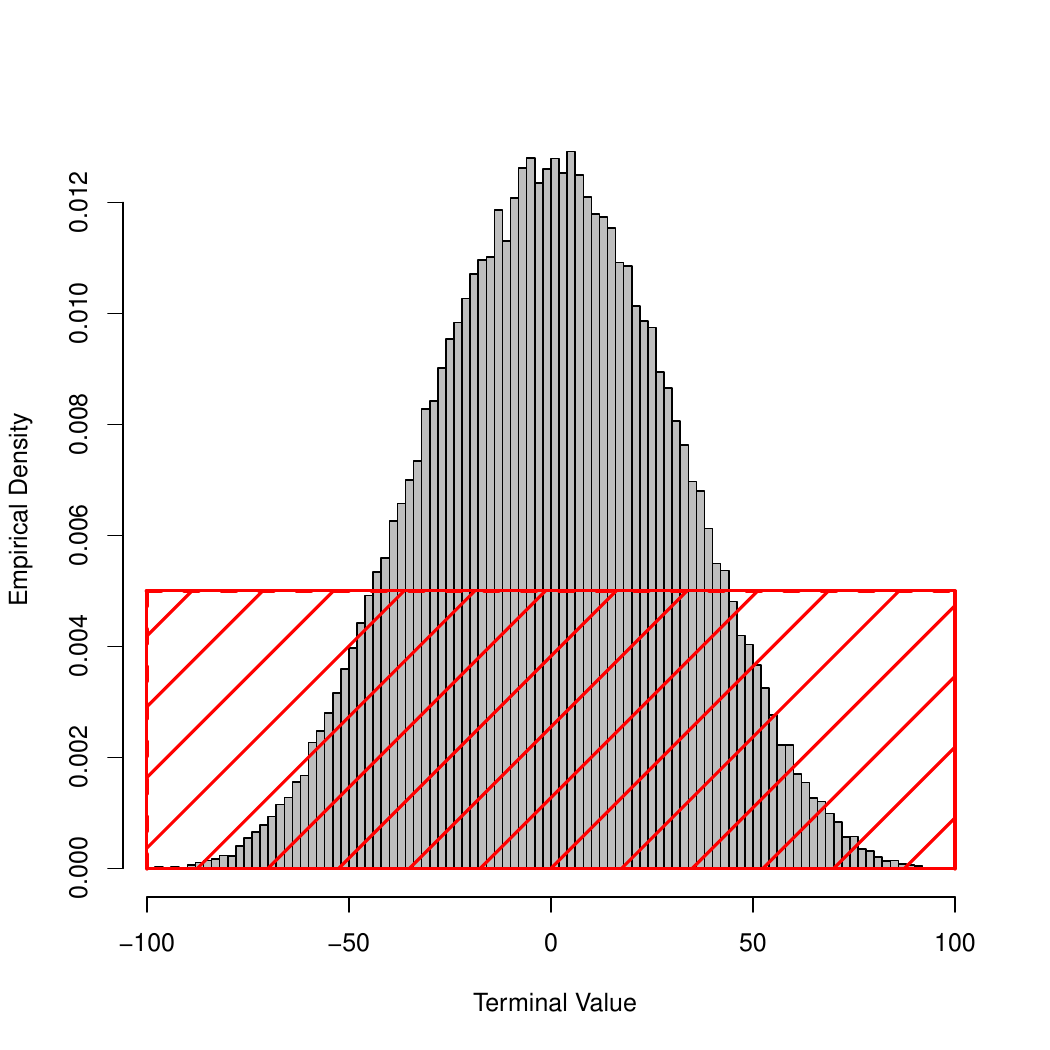}\hspace{0.2cm}
\includegraphics[width=0.45\columnwidth]{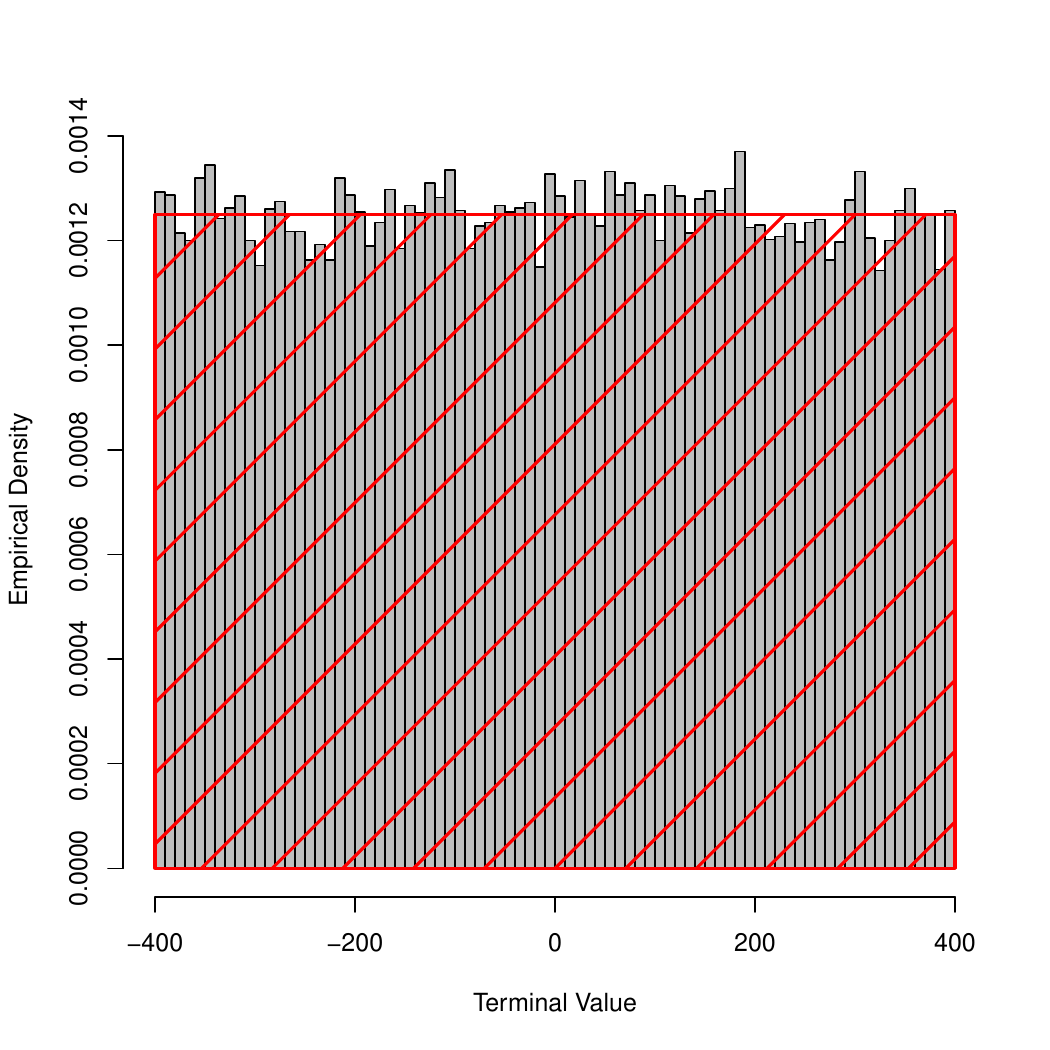}
\caption{Histograms for the transition densities of the solution $X$ of SDE \eqref{eq:martinX2kt} conditional on $X_s=0$ at time $s=90$ years. We take $k=1$ and use the Euler scheme. Left hand side: density $p_{X_{100y}|X_{90y}}(\cdot;0)$ at time $t=100$ years; right hand side: density $p_{X_{400y}|X_{90y}}(\cdot;0)$ at time $t=400y$. }\label{fig:tran1}
\end{figure}

Our simulation describes effectively our earlier results. For $p_{X_{100y}|X_{90y}}(\cdot;0)$ we condition on time in $90$ years, very far away in the future. Given the slowing activity of the SDE solution process, the process will move very slowly after 90 years. Indeed, in the time it takes to get 10 years further  it shows a conditional density for the next ten years, at 100 years, that seems qualitatively Gaussian. This is compatible with the process being so slow as to behave not too differently from an arithmetic Brownian motion qualitatively. Still, our limit-law results tell us that in the very long run the conditional density should go back to uniform. Indeed, this is illustrated in the simulated density $p_{X_{400y}|X_{90y}}(\cdot;0)$. We see that if we wait long enough, 310 years in this case, the density goes back to uniform.

\begin{remark}[Other boundaries]
One could choose time-boundaries that are concave and converge asymptotically to a constant value $B$, e.g. $b(t)=B t /(t+\beta)$ or $b(t)=B(1-e^{-\beta t})$ where $B>0,\beta>0$. It is also possible to use convex boundaries, like e.g. $b(t)=k(e^{\beta t}-1)$, $k>0, \beta>0$. Finally, as mentioned earlier, we could study boundaries of the form $b(t) = k t^\alpha$, $\alpha > 1/2, \ \ k>0$, since in this case too existence and uniqueness of the SDE strong solution is guaranteed.
\end{remark}

\section{Conclusions and further research}\label{sec:conclusions}
We introduced a way to design Stochastic Differential Equations of diffusion type admitting a unique strong solution distributed as a uniform law with conic time-boundaries.  While the result with general boundary is new and conditions for pathwise uniqueness of solutions are new, existence for the cases with square-root and linear boundaries had been dealt with previously in the peacocks literature. We further discussed our results in relation to the peacocks literature. 
We introduced also general mean-reverting diffusion processes having constant uniform margins at all times and showed limit-law theorems establishing that the transition densities also tend to uniform distributions after a long time. In doing so we derived the exact transition densities of the mean-reverting uniform-margins diffusions, and by re-scaling, the exact transition densities of the uniform peacock SDEs we derived initially. Our results may be used to model random probabilities, random recovery rates or random correlations.


\subsection*{Acknowledgements}
The research of Monique Jeanblanc is supported by Chair Markets in Transition  (F\'ed\'eration Bancaire Fran\c caise) and  Labex ANR 11-LABX-0019.
A visit of Frederic Vrins at Imperial College London that contributed to the development of this paper has been funded by the department of Mathematics at Imperial College London with the Research Impulse Grant DRI033DB.

The authors are grateful to C. Profeta for stimulating discussions and for suggesting the proof of Theorem \ref{th:attain}, improving the previous proof of the authors. The authors are grateful to Andreas Eberle for suggestions and correspondence on the proof of the limit law results.

\newpage

\begin{appendix}

\section*{Appendix}

\section{Proof that the solution of the peacock SDE \eqref{eq:martinX2} has uniform law}\label{app:moments}

We start with the following

\begin{definition}
A probability measure $\mu$ is \textit{determined by its moments} when it is the unique probability measure having this set of moments.
\end{definition}

\begin{lemma}
The continuous uniform distribution on $[a,b]$, $-\infty<a<b<\infty$, is determined by its moments.
\end{lemma}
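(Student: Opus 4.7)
The plan is to invoke Carleman's moment criterion, as the paper itself hints in the paragraph introducing the lemma. Carleman's theorem states that if $\mu$ is a probability measure on $\R$ with finite moments $\{m_n\}_{n\ge 0}$ satisfying the condition
\[
\sum_{n\ge 1} m_{2n}^{-1/(2n)} = +\infty,
\]
then $\mu$ is the unique probability measure on $\R$ with this sequence of moments (see e.g.\ Chapter 30 of \cite{billingsley95}). So the whole proof reduces to verifying this divergence condition for the uniform law $U([a,b])$.

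First I would compute (or merely crudely bound) the even moments of $X\sim U([a,b])$. Setting $c:=\max(|a|,|b|)$ we have $|X|\le c$ almost surely, so
\[
m_{2n} \;=\; \E[X^{2n}] \;\le\; c^{2n}\,,
\]
and consequently $m_{2n}^{-1/(2n)} \ge c^{-1}$ for every $n\ge 1$. Summing over $n$ gives $\sum_{n\ge 1} m_{2n}^{-1/(2n)} = +\infty$, which is Carleman's condition. Applying Carleman's theorem yields the claim: no other probability measure on $\R$ can share the moment sequence of $U([a,b])$.

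An alternative route, more elementary but requiring one extra step, would be a Stone--Weierstrass argument: on the compact interval $[a,b]$ polynomials are uniformly dense in $C([a,b])$, so any two probability measures supported on $[a,b]$ that agree on all monomials agree on all continuous functions, hence (by Riesz representation) coincide. The subtlety here is that a competing measure $\nu$ with the same moments as $U([a,b])$ is a priori a measure on all of $\R$, not obviously supported in $[a,b]$; one has to first rule out mass outside $[a,b]$ using the uniform bound $\int x^{2n}\,d\nu(x)=m_{2n}\le c^{2n}$ and Markov's inequality, which forces $\nu(\R\setminus[a,b])=0$. This is really the only mild obstacle in either route, and is precisely the work that Carleman's theorem packages cleanly. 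I would therefore present the short Carleman argument in the main text, keeping the Stone--Weierstrass variant as a remark if space permits.
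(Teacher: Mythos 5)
Your argument is correct, but it runs through a different determinacy criterion than the paper's. The paper's appendix proof invokes Theorem 30.1 of Billingsley, which requires checking that the series $\sum_k \alpha_k r^k/k!$ built from the moments has a positive radius of convergence; it verifies this with the same elementary bound you use, namely $|\alpha_k|\le c^k$ with $c=|a|\vee|b|$, so that the series is dominated by the exponential series. You instead bound the even moments by $c^{2n}$ and feed this into Carleman's criterion $\sum_n m_{2n}^{-1/(2n)}=+\infty$, which indeed holds since each term is at least $c^{-1}>0$ (note $c>0$ because $a<b$). Both routes are valid and rest on exactly the same moment estimate; the Billingsley condition is in fact stronger (it amounts to finiteness of the moment generating function near the origin, which implies Carleman), so for a compactly supported law either is overkill and equally short. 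Your concluding remark about the Stone--Weierstrass alternative is also sound, and you correctly identify the one genuine subtlety there: a competing measure with the same moments must first be shown to be supported in $[a,b]$, e.g.\ via $\nu(|x|>c+\varepsilon)\le m_{2n}/(c+\varepsilon)^{2n}\to 0$, before polynomial density on the compact interval can be applied. No gap; the only difference from the paper is the choice of packaged criterion.
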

\begin{proof}
Let us note $\alpha_k(p):=\int_{-\infty}^\infty x^k p(x)dx$ the $k$-th moment associated to a probability density function $p$. From Theorem 30.1 of~\cite{billingsley95}, it is known that if all the moments $\alpha_1(p),\alpha_2(p),\ldots $ are finite and are such that the series

$$S_r(p):=\sum_{k=1}^\infty \frac{\alpha_k(p) r^k}{k!}$$

admits a positive radius of convergence, then $p$ is \textit{determined by its moments}.


One concludes from this theorem that if a random variable $X$ satisfies $\E(X^k)=\alpha_k(p)$ for all $k\in\mathbb{N}$, then $X\sim p$ provided that (i) $|\alpha_k(p)|<\infty$ for $k\in\mathbb{N}$ and (ii) there exists $r>0$ such that the series $S_r(p)$ converges.

In particular, if the uniform density in $[a,b]$, $\rho(x):=\frac{1}{b-a}\ind_{\{a\leq x\leq b\}}$, satisfies (i) and (ii), then any random variable $X$ satisfying $\E(X^k)=\alpha_k(\rho)$ for all $k\in\{1,2,\ldots\}$ is uniformly distributed on $[a,b]$.

Let us show that (i) and (ii) are satisfied for the uniform density in $[a,b]$. Condition (i) is clearly met since the moments of the uniform distribution are finite. In particular, defining $c:=|a|\vee|b|$ one has $|\alpha_k(\rho)|\leq c^k<\infty$. On the other hand, for $r>0$,

$$0\leq \frac{|\alpha_k(\rho)|r^k}{k!}\leq \frac{(cr)^k}{k!}\;.$$

Since the series

$$S'_r:=\sum_{k=1}^\infty \frac{(cr)^k}{k!}$$

converges to $e^{cr}-1$,
the series $S_r(\rho)$ converges, too.
This shows that both conditions (i) and (ii) are met for $p=\rho$, and completes the proof.
\end{proof}

\begin{theorem}\label{th:momentsuni}
The solution $X$ to the SDE~(\ref{eq:martinX2}) is a uniform martingale on $[-b(t),b(t)]$ in the sense that for all $t>0$, $X_t$ is uniformly distributed on $[-b(t),b(t)]$.
\end{theorem}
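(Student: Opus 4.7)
My approach is to combine the preceding moment-determinacy lemma with an Itô–ODE argument: I will show that all moments of $X_t$ coincide with those of a uniform law on $[-b(t),b(t)]$, and then appeal to the lemma to conclude that the laws agree.

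\textbf{Step 1: Moment ODE.} Since $|X_t|\le b(t)$ almost surely (by the argument already given in Theorem \ref{th:exist}), all moments $\mu_n(t):=\E[X_t^n]$ are finite and the stochastic integrals below are true martingales. Applying Itô's formula to $x\mapsto x^n$ and taking expectations, using $\sigma^2(x,t)=\ind_{\{x\in[-b,b]\}}(\dot b/b)(b^2-x^2)$ and the fact that $X_t$ lives in $[-b(t),b(t)]$, one gets
\[
\dot\mu_n(t) \;=\; \frac{n(n-1)}{2}\,\frac{\dot b(t)}{b(t)}\Bigl(b(t)^2\,\mu_{n-2}(t)-\mu_n(t)\Bigr),\qquad \mu_n(0)=0\ \text{for }n\ge 1,\ \ \mu_0\equiv 1.
\]
Integrability of the drift near $t=0$ follows from the boundedness of $\dot b\, b$ on $(0,T]$ and the bound $|X_t^{n-2}\sigma^2(X_t,t)|\le \dot b(t) b(t)^{n-1}$, so the computation is justified.

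\textbf{Step 2: Induction on $n$.} Let $m_n(t)$ denote the $n$-th moment of the uniform law on $[-b(t),b(t)]$, i.e.\ $m_n(t)=0$ for $n$ odd and $m_n(t)=b(t)^n/(n+1)$ for $n$ even. The odd case is immediate from $\mu_1(t)\equiv 0$ (martingale) and an obvious induction. For the even case, a direct check gives
\[
\dot m_n(t)=\frac{n\,\dot b(t)\,b(t)^{n-1}}{n+1}=\frac{n(n-1)}{2}\frac{\dot b(t)}{b(t)}\Bigl(b(t)^2\,\tfrac{b(t)^{n-2}}{n-1}-\tfrac{b(t)^n}{n+1}\Bigr),
\]
which is exactly the ODE satisfied by $\mu_n$ when $\mu_{n-2}$ is replaced by $m_{n-2}$. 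By the induction hypothesis $\mu_{n-2}\equiv m_{n-2}$, hence $\mu_n$ and $m_n$ satisfy the same first-order linear ODE with the same initial condition $0$ at $t=0$; uniqueness of the solution of this linear ODE on $(0,T]$ (with integrating factor $b(t)^{n(n-1)/2}$) yields $\mu_n\equiv m_n$.

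\textbf{Step 3: Conclusion via moment determinacy.} For each $t>0$, the random variable $X_t$ has the same moments as $U([-b(t),b(t)])$. By the previous lemma, the uniform law on a bounded interval is determined by its moments. Therefore $X_t\sim U([-b(t),b(t)])$ for every $t>0$, which is the claim.

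\textbf{Main difficulty.} The only delicate point is the behaviour of the moment ODE at $t=0$, where $\dot b/b$ may blow up. This is handled by rewriting the equation in its integrated form with the integrating factor $b(t)^{n(n-1)/2}$, which absorbs the singularity: integrability of $\dot b(s)b(s)^{n-1}$ near $0$ (guaranteed by the hypothesis that $\dot b\,b$ is bounded) gives a well-defined unique solution vanishing at $0$, and the induction closes cleanly.
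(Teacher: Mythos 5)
Your proof is correct and follows essentially the same route as the paper's: derive the moment recursion via It\^o's formula, verify that the uniform moments $b(t)^n/(n+1)$ (and $0$ for odd $n$) solve it, and conclude by the moment-determinacy lemma. Your explicit treatment of uniqueness of the moment ODE near the singular point $t=0$ via the integrating factor $b(t)^{n(n-1)/2}$ is a welcome refinement of a step the paper merely asserts, but it is not a different approach.
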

\begin{proof}
From the above results, it is enough to show that all moments of the random variable $X_t$ ($t>0$) associated to eq. (\ref{eq:martinX2}) coincide with those of the density $\ind_{\{-b(t)\leq x\leq b(t)\}}\frac{1}{2b(t)}$.

Let $X$ be a random variable uniformly distributed on $[a,b]$. Then,

$$\E(X^n)=\frac{1}{n+1}\sum_{i=0}^n (a)^i(b)^{n-i}\;.$$

In the special case where $a=-b$, this expression reduces to

	\beqn
	\E(X^n) = \left\{\begin{array}{ll}
			\frac{b^n}{n+1} 	& \hbox{ if $n$ is odd } \\
			0		& \hbox{ otherwise. }
				\end{array}
		  \right.\nonumber
	\eeqn

Let us now compute the moments of

$$X_t=\int_0^t\sigma(X_s,s)dW_s~,\qquad\sigma(t,x)=\ind_{\{ -b(t)\leq x\leq b(t)\}} \sqrt{\frac{\dot{b}(t)}{b(t)}}\sqrt{b^2(t)-x^2}$$

solving eq.~(\ref{eq:martinX2}). By It\^o's lemma:

$$X_t^n=n\int_0^t X_s^{n-1}dX_s+\frac{1}{2}n(n-1)\int_0^t X_s^{n-2}\sigma^2(X_s,s)ds$$

and we can compute the expression for the $n$-th moment, $n\geq 2$ using a recursion. Using the property that It\^o 's  integrals have zero expectation and exchanging integration and expectation operators, which is possible since  $X_s^{n-2}\sigma^2(X_s,s)$ is bounded for all $s$ and $n\geq 2$, we obtain

\beqn
\E(X_t^n)&=&n\E\left(\int_0^t X_s^{n-1}dX_s\right)+\frac{1}{2}n(n-1)\E\left(\int_0^t X_s^{n-2}\sigma^2(X_s,s)ds\right)\nonumber\\
&=&\frac{n(n-1)}{2}\left(\int_0^t b(s)\dot{b}(s)\E(X_s^{n-2})ds-\int_0^t \frac{\dot{b}(s)}{b(s)}\E(X_s^{n})ds\right)\label{eq:unifmoment}
\eeqn

Notice that we have postulated in the last equality that the indicator $\ind_{\{ -b(s)\leq X_s\leq b(s)\}} $ in $\sigma(t,x)$ is always 1. This is a natural assumption: it says that $X$ cannot stay on a boundary with a strict positive probability for a given period of time. This happens because in case $X$ reaches $\pm b(t)$ at some time $t$, the process is locally frozen ($\sigma(t,x)=0$) but the boundary $b(t)$ keeps on growing.

Obviously, $\E(X_t)=X_0=0$ since $X$ is a martingale and one concludes from eq.~(\ref{eq:unifmoment}) that the $n$-th moment of $X_t$ is zero when $n$ odd. For $n$ even, eq.~(\ref{eq:unifmoment}) can be written as
\beq
M_n(t)=\frac{n(n-1)}{2}\left(\int_0^t b(s)\dot{b}(s) M_{n-2}(s)ds-\int_0^t \frac{\dot{b}(s)}{b(s)}M_n(s)ds\right)\nonumber
\eeq

with $M_n(t):=\E(X_t^n)$. This can be written as a recursive differential equation
\beq
\frac{\partial M_n(t)}{\partial t}=\dot{b}(t)\frac{n(n-1)}{2}\left(b(t) M_{n-2}(t)- \frac{1}{b(t)}M_n(t)\right)\nonumber
\eeq

with the constraint that $f(t,0)=\E(X_t^0)=1$. The solution to this equation is $M_n(t)=b^n(t)/(n+1)$. One concludes that $X_t$ is uniform on $[-b(t),b(t)]$ since all the odd  moments are zero and all the even moments are given by
$$\E(X_t^n)=\frac{b^n(t)}{n+1}$$
and agree with those of a random variable uniformly distributed on $[-b(t),b(t)]$.
\end{proof}

\section{First conditional moments}

The first six conditional moments are

\begin{eqnarray}
M_1(s,t;z)&=&z\frac{b(s)}{b(t)}\nonumber\\
M_2(s,t;z)&=&\frac{1}{3}+\left(z^2-\frac{1}{3}\right)\left(\frac{b(s)}{b(t)}\right)^3\nonumber\\
M_3(s,t;z)&=&\frac{3}{5}z\left(\frac{b(s)}{b(t)}-\left(\frac{b(s)}{b(t)}\right)^6\right)+z^3\left(\frac{b(s)}{b(t)}\right)^6\nonumber\\
M_4(s,t;z)&=&\frac{1}{5}+\left(z^4-\frac{1}{5}\right)\left(\frac{b(s)}{b(t)}\right)^{10}+\frac{6}{7}\left(z^2-\frac{1}{3}\right)\left(\left(\frac{b(s)}{b(t)}\right)^3-\left(\frac{b(s)}{b(t)}\right)^{10}\right)\nonumber\\
M_5(s,t;z)&=&\frac{1}{21}z\left(9\frac{b(s)}{b(t)}-14\left(\frac{b(s)}{b(t)}\right)^6+5\left(\frac{b(s)}{b(t)}\right)^{15}\right)\nonumber\\
&&+\frac{10}{9}z^3\left(\left(\frac{b(s)}{b(t)}\right)^6-\left(\frac{b(s)}{b(t)}\right)^{15}\right)+z^5\left(\frac{b(s)}{b(t)}\right)^{15}\nonumber\\
M_6(s,t;z)&=&\frac{1}{7}+\left(z^6-\frac{1}{7}\right)\left(\frac{b(s)}{b(t)}\right)^{21}
+ \frac{15}{11}\left(z^4-\frac{1}{5}\right)\left(\left(\frac{b(s)}{b(t)}\right)^{10}-\left(\frac{b(s)}{b(t)}\right)^{21}\right)\nonumber\\
& & +\frac{5}{77}\left(z^2-\frac{1}{3}\right)\left(11 \left(\frac{b(s)}{b(t)}\right)^3-18 \left(\frac{b(s)}{b(t)}\right)^{10} + 7 \left(\frac{b(s)}{b(t)}\right)^{21}\right)\nonumber
\end{eqnarray}

The first six $\alpha$ matrices are

\[\alpha[1]=\alpha[2]=
\left[\begin{array}{ll}
1
\end{array}\right]\;,\;\alpha[3]=
\left[\begin{array}{ll}
3/5&0\\
3/5&1
\end{array}\right]\;,\;\alpha[4]=
\left[\begin{array}{ll}
6/7&0\\
6/7&1
\end{array}\right]\;,\;
\]
\[\alpha[5]=
\left[\begin{array}{lll}
3/7&0&0\\
2/3&10/9&0\\
5/21&10/9&1
\end{array}\right]\;,\;\alpha[6]=
\left[\begin{array}{lll}
5/7&0&0\\
90/77&15/11&0\\
35/77&15/11&1
\end{array}\right]\;.
\]

\section{Other uniform diffusions}\label{app:sampling}

Let  $(F(\cdot;t);t \geq 0)$ be a set of time-indexed invertible CDFs with densities $f(y;t)=\frac{\partial F(y;t)}{\partial y}$ and $G(\cdot;t)$ the inverse of $F(\cdot;t)$ satisfying $G(F(x;t);t)=x$ for all $x$ and all $t\geq0$. The stochastic process $U_t:=(1+\tilde{Z}_t)/2$ (where $\tilde{Z}$ is the solution of \eqref{eq:unifZ}) is uniform in $[0,1]$. Setting $Y_t:=G(U_t;t)$ (so that $F(Y_t;t)= U_t$), the stochastic process $Y$ has time-$t$ marginal CDFs $F(\cdot;t)$ and its dynamics are given by (to check)
\begin{eqnarray}
dY_t&=&\frac{\partial G(U_t;t)}{\partial t}dt+\frac{1}{f\left(G(U_t;t);t\right)}dU_t-\frac{1}{2}\frac{1}{f^2\left(G(U_t;t);t\right)}\frac{1}{f\left(G(U_t;t);t\right)}d\langle U\rangle_t\nonumber\\
&=&G_t(F(Y_t;t)_t;t)dt+\frac{1}{2f\left(Y_t;t\right)}d\tilde{Z}_t-\frac{1}{8}\frac{1}{f^3\left(Y_t;t\right)}d\langle \tilde{Z}\rangle_t\nonumber\\
&=&\left(G_t(F(Y_t;t);t)+\frac{\dot{b}(t)}{2b(t)f(Y_t;t)}(1-2F(Y_t;t))+\frac{\dot{b}(t)}{b(t)f^3(Y_t;t)}F(Y_t;t)(F(Y_t;t)-1)\right)dt\nonumber\\
&&\hspace{6cm}+\sqrt{\frac{2\dot{b}(t)}{b(t)f^2(Y_t;t)}F(Y_t;t)(1-F(Y_t;t))}dW_t
\end{eqnarray}

What is striking is that the martingale that is uniform in the expanding boundary $ t \mapsto [-b(t),b(t)]$ seems to be essentially unique, in the sense that there is only one diffusion coefficient that will make the diffusion martingale attain a  uniform law in $[-b(t),b(t)]$. One can check this informally by inspecting the ``invert the Fokker-Planck-Kolmogorov'' equation approach we adopted. However, there would be many diffusions with uniform margins in general. Indeed, there are for example many $b(t)$ that would lead to a uniform $Z$ in $(-1,1)$. More generally, we can find uniform diffusions whose drift and diffusion coefficients take a completely different form with respect to the ``$\dot{b}/b$'' proportional drift of Section \ref{sec:meanrev}. We now give an example.

\begin{proposition}
Let $W$ be a standard Brownian motion. Define  $Z_t:=2\Phi\left(\frac{W_t}{\sqrt{t}}\right)-1$. $Z$ is a stochastic process with uniform distribution in $(-1,1)$ at all times (possibly with random initialization $Z_{t_0}=\zeta$ at $t_0>0$). The dynamics of $Z$ are given by

\begin{eqnarray}
dZ_t&=&\underbrace{-\frac{2}{t}\Phi^{-1}\left(\frac{1+Z_t}{2}\right)\varphi\left(\Phi^{-1}\left(\frac{1+Z_t}{2}\right)\right)}_{\mu(t,x)}dt+\underbrace{\frac{2}{\sqrt{t}}\varphi\left(\Phi^{-1}\left(\frac{1+Z_t}{2}\right)\right)}_{\sigma(t,x)}dW_t\nonumber   .
\end{eqnarray}

It can be shown that this satisfies the Forward-Kolmogorov equation with

$p(x,t)= (1/2) \ind_{x\in(-1,1)}$ as
$$\mu_x(t,x)=(\sigma_x(t,x))^2+\sigma(t,x)\sigma_{xx}(t,x)=\left(\left(\Phi^{-1}\left(\frac{1+Z_t}{2}\right)\right)^2-1\right)/t$$

Moreover, the law of $Z_t|Z_s$ tends to that of a Uniform in $(-1,1)$ as $t\to\infty$.
\end{proposition}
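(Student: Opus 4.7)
The statement packs four claims into one: the marginal is uniform, the SDE takes the stated form, this SDE has the uniform density as a stationary solution of the forward Kolmogorov equation, and the conditional law converges to uniform. My plan is to tackle them in this order, since each is essentially a direct computation and none requires new machinery beyond It\^o's formula and the identity $\varphi'(x) = -x\varphi(x)$. The marginal claim is the probability integral transform: $W_t/\sqrt{t}\sim N(0,1)$ for every $t>0$, so $\Phi(W_t/\sqrt{t})\sim U(0,1)$ and therefore $Z_t = 2\Phi(W_t/\sqrt{t})-1 \sim U(-1,1)$.

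For the SDE, I would apply It\^o's formula to $g(t,w):=2\Phi(w/\sqrt{t})-1$. Using $\varphi'(x)=-x\varphi(x)$ one finds $g_t=-\tfrac{w}{t^{3/2}}\varphi(w/\sqrt{t})$, $g_w=\tfrac{2}{\sqrt{t}}\varphi(w/\sqrt{t})$ and $g_{ww}=-\tfrac{2w}{t^{3/2}}\varphi(w/\sqrt{t})$, whence $dZ_t=(g_t+\tfrac12 g_{ww})\,dt + g_w\,dW_t$. Inverting the defining relation gives $W_t = \sqrt{t}\,\Phi^{-1}((1+Z_t)/2)$, and substitution recovers exactly the stated $\mu$ and $\sigma$. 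For the Fokker--Planck check, the uniform density $p(x,t)=\tfrac12\ind_{(-1,1)}(x)$ satisfies $\partial_t p = 0$ and is constant in $x$ on the interior, so the FPK reduces on $(-1,1)$ to $\mu_x = \tfrac12(\sigma^2)_{xx} = (\sigma_x)^2+\sigma\sigma_{xx}$. Setting $u = \Phi^{-1}((1+x)/2)$, so that $u_x=1/(2\varphi(u))$, and using $\varphi'(u)=-u\varphi(u)$ again, a short computation yields $\sigma_x = -u/\sqrt{t}$, $\sigma\sigma_{xx}=-1/t$, and $\mu_x=(u^2-1)/t$, so both sides collapse to $(u^2-1)/t$ as claimed.

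For the limit of the conditional law, I would condition on $Z_s=z$, so that $W_s=\sqrt{s}\,u_0$ with $u_0=\Phi^{-1}((1+z)/2)$. Writing $W_t = W_s + (W_t-W_s)$ with $W_t-W_s\sim N(0,t-s)$ independent of $\mathcal{F}_s$ gives $W_t/\sqrt{t}\mid Z_s=z \;\stackrel{d}{=}\; \sqrt{s/t}\,u_0 + \sqrt{(t-s)/t}\,\xi$ with $\xi\sim N(0,1)$. As $t\to\infty$ this converges in distribution to $N(0,1)$, and the continuous map $w\mapsto 2\Phi(w)-1$ transports the convergence to $Z_t\mid Z_s=z \Rightarrow U(-1,1)$. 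The main difficulty, such as it is, is purely bookkeeping: in both the It\^o step and the FPK verification one must track the $t$-dependence inside $\Phi$ carefully and apply $\varphi' = -u\varphi$ without sign errors; beyond that the argument is mechanical.
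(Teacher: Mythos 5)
Your proposal is correct and follows essentially the same route as the paper: for the limit of the conditional law, the paper likewise conditions on $W_s$, writes $W_t=W_s+(W_t-W_s)$ with an independent Gaussian increment, and passes to the limit (there via the explicit conditional CDF $\Phi\bigl((\sqrt{t}\,\Phi^{-1}((1+x)/2)-W_s)/\sqrt{t-s}\bigr)\to(1+x)/2$, which is the CDF form of your continuous-mapping argument). Your It\^o computation of $\mu,\sigma$ and the Fokker--Planck identity $\mu_x=(\sigma_x)^2+\sigma\sigma_{xx}=(u^2-1)/t$ simply fill in, correctly, the steps the paper asserts without detailed proof.
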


\begin{proof}
Conditioning upon $Z_s$ is equivalent to conditioning w.r.t. $W_s$ as $Z_s=2\Phi\left(\frac{W_s}{\sqrt{s}}\right)-1$. But $\left.\frac{W_t}{\sqrt{t}}\right|_{W_s}\sim\frac{W_s+\sqrt{t-s}Z}{\sqrt{t}}$ where $Z\sim\mathcal{N}(0,1)$. Hence,
$$\Pb(Z_t\leq x|Z_s)=\Pb\left(\left.\Phi\left(\frac{W_t}{\sqrt{t}}\right)\leq \frac{1+x}{2}\right|W_s\right)=\Pb\left(\left.\Phi\left(\frac{W_s+\sqrt{t-s}Z}{\sqrt{t}}\right)\leq \frac{1+x}{2}\right|W_s\right)$$

Inverting the standard Normal CDF,

$$\Pb(Z_t\leq x|Z_s)=\Pb\left(Z\leq \frac{\sqrt{t}\Phi^{-1}\left(\frac{1+x}{2}\right)-W_s}{\sqrt{t-s}}\right)=\Phi\left(\frac{\sqrt{t}\Phi^{-1}\left(\frac{1+x}{2}\right)-W_s}{\sqrt{t-s}}\right)$$

So $\Pb(Z_t\leq x|Z_s)\to (1+x)/2$ as $t\to\infty$ for all $(s,Z_s)$ where $Z_s\in(-1,1)$.
\end{proof}

\end{appendix}

\end{document}